\documentclass[11pt]{amsart}

\usepackage{graphicx}
\usepackage{amssymb,amsmath,amsthm,newlfont,enumerate}
\usepackage{xcolor}
\usepackage{longtable}
\usepackage{hyperref}
\usepackage{microtype}
\usepackage[ruled,vlined]{algorithm2e}
\usepackage{multirow}
\usepackage{listings}
\usepackage{multicol}
\usepackage{bbm, dsfont}
\usepackage{etoolbox}
\usepackage[caption = false]{subfig}
\usepackage{comment}
\usepackage{natbib, float}
\usepackage{placeins}

\theoremstyle{plain}
\newtheorem{theorem}{Theorem}[section]

\newtheorem{lemma}[theorem]{Lemma}
\newtheorem{proposition}[theorem]{Proposition}

\theoremstyle{definition}

\theoremstyle{remark}
\newtheorem*{remark}{Remark}
\newtheorem*{remarks}{Remarks}

\newcommand{\X}{{\mathbf X}}

\newcommand{\RR}{{\mathbb R}}

\newcommand{\ZZ}{{\mathbb Z}}
\newcommand{\cA}{{\cal A}}

\newcommand{\cL}{{\cal L}}

\DeclareMathOperator{\card}{card}

\DeclareMathOperator{\diam}{diam}

\renewcommand{\hat}{\widehat}
\renewcommand{\tilde}{\widetilde}

\title[A quantitative Heppes Theorem]{A quantitative Heppes Theorem and multivariate Bernoulli distributions}

\author[Fraiman]{Ricardo Fraiman}
\address{Centro de Matem\'atica, Facultad de Ciencias, Universidad de la Rep\'ublica, Montevideo, Uruguay.}
\email{rfraiman@cmat.edu.uy} 

\author[Moreno]{Leonardo Moreno}
\address{Instituto de Estad\'{i}stica, Departamento de M\'etodos Cuantitativos, FCEA, Universidad de la Rep\'ublica, 
Montevideo, Uruguay.}
\email{leonardo.moreno@fcea.edu.uy} 

\author[Ransford]{Thomas Ransford}
\address{D\'epartement de math\'ematiques et de statistique, Universit\'e Laval,
Qu\'ebec City,  Canada. }
\email{ransford@mat.ulaval.ca}

\begin{document}
\begin{abstract}
Using some extensions of a theorem of Heppes  on finitely supported discrete probability measures,
we address the problems of classification and testing based on projections. 
In particular, when the support of the distributions is known in advance (as for instance for multivariate Bernoulli distributions),
a single suitably chosen projection determines the distribution. Several applications of these results are considered.
\end{abstract}

\keywords{Classification, Discrete tomography, Heppes theorem, Multivariate Bernoulli,  Random projections,  Testing hypothesis}

\makeatletter
\@namedef{subjclassname@2020}{\textup{2020} Mathematics Subject Classification}
\makeatother

\subjclass[2020]{60E05, 62E10, 62G10, 62H30, 62H15}

\maketitle

\section{Introduction}
The problem of dimension reduction is of increasing importance in modern data analysis. In this setting, we  address the problem of when we can efficiently perform some important statistical analysis on high-dimensional discrete data, such as learning or  hypothesis testing.

For supervised  classification problems with high-dimensional binary data, nearest-neighbour or kernel based rules have a poor behaviour, and some other well-known techniques like  Random Forest (\cite{breiman2001}) or  Support Vector Machines (SVM see \cite{boser1992}) are alternatives used in practice.  However, as mentioned in  \cite{xu2021}, there are just a few procedures particularly well designed for discrete data (see \cite{needell2018,molitor2019,molitor2021}). In particular, \cite{needell2018} consider a classification rule for binary data,  where the classification rule is updated recursively using projections, but over  hyperplanes determined by subsets of the original coordinates  chosen at random.

In this article, we propose an alternative approach that provides competitive procedures for the two main problems under consideration, hypothesis testing and learning.
It also has potential applications to other statistical problems, since it provides a general dimension-reduction technique, based on a bound for the total variation distance between probability measures in terms of the distances between certain of their lower-dimensional projections.
For example, this idea can be applied to perform clustering or to use depth functions in lower dimensions, among other applications that are not considered in the present manuscript.

Our approach is based on a variant of a theorem of Cram\'er and Wold (\cite{cramer36}).
According to the original Cram\'er--Wold  theorem, if $P$ and $Q$ are Borel probability measures on $\RR^d$
whose projections $P_L,Q_L$ onto each line $L$ satisfy $P_L=Q_L$, then $P=Q$. 
There are several extensions of this theorem,
in which one assumes more about the nature of the measures $P,Q$ 
and less about the set of lines $L$ for which $P_L=Q_L$.
For example, if $P$ and $Q$ have  moment generating functions that are finite in a neighbourhood of the origin,
and if $P_L=Q_L$ for all lines $L$ in a set of positive measure (in a natural sense),
then $P=Q$. 
Articles on this subject include \cite{renyi1952},
\cite{gilbert1955},  \cite{belisle1997} and  \cite{cuesta2007}.

If  $P$ and $Q$ are supported on finite sets, then it is even possible
to differentiate between them using only a finite set of projections.
The following result is due to Heppes  \cite[Theorem~$1'$]{heppes1956}.
We denote by $P_H$  the projection of a probability measure $P$ onto a general subspace $H$.

\begin{theorem}\label{T:Heppes56}
Let $Q$ be a discrete probability measure on $\RR^d$ whose support consists of $k$  points.
Let $H_1, H_2, \ldots, H_{k+1}$ be subspaces of $\RR^d$ respectively of dimensions $m_1, \ldots, m_{k+1}$, such that no two of these subspaces are contained in a single hyperplane, that is, no arbitrary straight line in $\RR^d$ can be perpendicular to more than one of the $H_i$. If $P$ is a Borel probability measure on $\RR^d$  such that 
$P_{H_i}= Q_{H_i}$, $i=1, \ldots, k+1$, then $P=Q$. 
\end{theorem}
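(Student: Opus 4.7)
The plan is to reformulate the geometric hypothesis and then apply essentially the same pigeonhole argument twice. I would first note that the condition ``no line can be perpendicular to more than one $H_i$'' is equivalent to
\begin{equation*}
H_i^\perp \cap H_j^\perp = \{0\} \qquad (i \neq j),
\end{equation*}
so that for any two distinct points $y, y' \in \RR^d$, the vector $y - y'$ lies in at most one of the subspaces $H_i^\perp$. Writing $\pi_i$ for the orthogonal projection onto $H_i$, this simple observation will do all the heavy lifting.

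The first step will be to show $\supp(P) \subseteq \supp(Q)$. For any $y \in \supp(P)$, continuity of $\pi_i$ gives $\pi_i(y) \in \supp(P_{H_i}) = \supp(Q_{H_i}) = \pi_i(\supp(Q))$, so there exists $q_i \in \supp(Q)$ with $y - q_i \in H_i^\perp$. If $y$ were not in $\supp(Q)$, then every $q_i$ would differ from $y$ and so $y - q_i \neq 0$; by the reformulated hypothesis the $q_1, \ldots, q_{k+1}$ would then be pairwise distinct, yielding $k+1$ distinct elements of the $k$-point set $\supp(Q)$, which is impossible. Hence $P$ is a discrete measure carried by at most $k$ atoms.

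The second step will be to set $\mu := P - Q$, a signed measure carried by the finite set $\supp(Q)$, and let $T = \supp(\mu)$, so that $|T| \leq k$. The hypotheses become $\mu_{H_i} = 0$ for every $i$. Assuming $\mu \neq 0$ and picking any $y \in T$, evaluating $\mu_{H_i}$ at $\pi_i(y)$ forces
\begin{equation*}
\sum_{y' \in T,\; \pi_i(y') = \pi_i(y)} \mu(\{y'\}) = 0,
\end{equation*}
and, since $\mu(\{y\}) \neq 0$, some $y_i' \in T \setminus \{y\}$ with $y - y_i' \in H_i^\perp \setminus \{0\}$. The same linear-algebra observation as before makes $y_1', \ldots, y_{k+1}'$ pairwise distinct, giving $|T| \geq k+2$ and contradicting $|T| \leq k$. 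Thus $\mu = 0$ and $P = Q$.

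The main obstacle I anticipate is simply recognising that the pigeonhole step has to be run twice. The first run is the informative one: without first pinning $\supp(P)$ inside $\supp(Q)$, a naive count only shows that $P$ is discrete with support of size at most $k^2$ (intersect the fibres over $H_1$ and $H_2$), which leaves a target set too large to close the argument. The appearance of $k+1$ subspaces, exactly one more than $|\supp(Q)|$, is precisely what is needed to win both applications.
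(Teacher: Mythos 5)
Your proof is correct. Both steps are sound: the identification of the hypothesis with $H_i^\perp\cap H_j^\perp=\{0\}$ is exactly right, the inclusion $\supp(P)\subseteq\supp(Q)$ follows as you say from $\pi_i(y)\in\supp(P_{H_i})=\pi_i(\supp(Q))$ plus the pairwise-distinctness of the witnesses $q_i$, and the second pigeonhole on $T=\supp(P-Q)$ produces $k+2$ distinct points of $T$, which is the required contradiction. Note, however, that the paper does not reprove this statement directly (it is quoted from Heppes); what it proves is the quantitative generalization $d_{TV}(P,Q)\le\sum_{j}d_{TV}(P_{H_j},Q_{H_j})$ (Theorem~\ref{teo1}), of which Theorem~\ref{T:Heppes56} is the special case where all the right-hand terms vanish. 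The geometric core is the same in both arguments --- the $k+1$ punctured fibres $(x+H_j^\perp)\setminus\{x\}$ are pairwise disjoint, so at most $k$ of them can meet $\supp(Q)$ --- but the architecture differs. The paper runs the pigeonhole only once, and only against $\supp(Q)$: for every $x$ some fibre satisfies $Q(x+H_j^\perp)=Q(\{x\})$, and a partition of an arbitrary Borel set $B$ according to which fibre works converts this into a bound on $P(B)-Q(B)$ with no discreteness assumption on $P$ at any stage. You instead run the pigeonhole twice, first to force $P$ to be discrete with support inside $\supp(Q)$ and then to annihilate the signed measure $P-Q$. Your route is the more elementary and self-contained proof of the qualitative statement (and closer in spirit to Heppes' original argument), but it is intrinsically qualitative: the first step uses only the supports of the projections, so it cannot be upgraded to the total-variation inequality that is the paper's actual objective. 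Your closing remark about why a single application of the pigeonhole is insufficient in your framework is accurate, and it is precisely the point at which the paper's partition trick earns its keep.
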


An example of \cite{renyi1952} shows that a minimum of $(k+1)$ subspaces are needed in Theorem~\ref{T:Heppes56}. On the other hand, the ambient dimension $d$ plays no role in this result, which could just as easily have been formulated in any separable Hilbert space. We also note that, though $Q$ is assumed discrete, no such assumption is made about $P$. This makes the result suitable for hypothesis testing.

We establish a quantitative version of Heppes' theorem (see Theorem \ref{teo1} in next section),
where we show that the total variation distance between $P$ and $Q$ is at most the
sum of the total variation distances between their projections.
We also prove  a refinement of Heppes' theorem where, if we know in advance the support of the distribution, then merely one suitably chosen projection suffices to determine the probability measure (see Proposition~\ref{unsoloH} in the next section). This projection just needs to be chosen to avoid a finite or countable number of bad directions, and thus almost all directions will be adequate. In particular, this is the case of multivariate binary distributions, which will be the focus of part of this paper.

The main results regarding projections are presented in Section~\ref{quantitative}. In Section~\ref{learning} we propose a procedure for learning from projections, and adapt it for the learning problem for discrete tomography. Next we consider the case of testing for multivariate binary data in Section~\ref{binary} and analyze the testing problem for some well-known distributions for multivariate binary data, an important problem for many applications (see for instance the nice review on this problem in \cite{balakrishnan2018}).
 Section~\ref{unasola} addresses a different problem, namely  if we can still say  something when we have only one realization $\mathbf X$ of a multivariate Bernoulli distribution. 

Our results are illustrated with some simulation examples in Section~\ref{simus} and three real-data examples
in Section~\ref{S:realdata}.

We end this section with a remark about possible extensions of our ideas to continuous distributions. As mentioned above, the starting point of this article is  Heppes' theorem, which allows us to differentiate between two finitely supported measures on $\RR^d$ using only finitely many projections. For more general measures, it is usually necessary to use
an infinite set of projections (see e.g.\ \cite{hamedani75}).
However, it turns  out that finitely many projections suffice if both $P$ and $Q$ are elliptical distributions
(a class of measures that includes not only Gaussian distributions but also many other continuous multivariate distributions of interest, including some with infinite first moments). This result, which may be viewed as a kind of analogue of Heppes' theorem for continuous distributions,
is discussed in more detail at the end of the paper.

\section{Some variants of Heppes' Theorem} \label{quantitative}

Given two probability measures $P,Q$ on a measurable
space $(E,\cA)$, we consider the \emph{total variation distance} between them, namely
\[
d_{TV}(P,Q):=\sup_{A\in\cA}|P(A)-Q(A)|.
\]

Also, given a Borel probability measure $P$ on $\RR^d$ and a subspace $H$ of $\RR^d$,
we write $P_H$ for the projection of $P$ onto $H$, namely the Borel probability measure on $H$ given by
\[
P_H(B):=P(\pi_H^{-1}(B)),
\]
where $\pi_H:\RR^d\to H$ is the orthogonal projection of $\RR^d$ onto $H$.

Clearly, if $P,Q$ are Borel probability measures on $\RR^d$ and $H$ is a subspace of $\RR^d$,
then
\[
 d_{TV}(P_H,Q_H)\le d_{TV}(P,Q).
\]
In this section we investigate some inequalities going in the opposite sense.

\subsection{A quantitative Heppes theorem}\

The following result is a quantitative generalization of Theorem~{\ref{T:Heppes56}.

\begin{theorem}\label{teo1}
Let $Q$ be a discrete probability measure on $\RR^d$
whose support contains at most  $k$ points.
	Let $H_1,\dots,H_{k+1}$ be  subspaces of $\RR^d$ such that 
	$H_i^\perp\cap H_j^\perp=\{0\}$ whenever $i\ne j$.
Then, for every Borel probability measure $P$ on $\RR^d$, we have
	\begin{equation}\label{E:quantHeppes}
	d_{TV}(P,Q)\le \sum_{j=1}^{k+1} d_{TV}(P_{H_j},Q_{H_j}) .
	\end{equation}
\end{theorem}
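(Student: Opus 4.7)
My plan is to construct, for each $j \in \{1,\ldots,k+1\}$, an explicit subset $B_j \subset H_j$ so that the individual estimates $d_{TV}(P_{H_j},Q_{H_j}) \geq \mu_{H_j}(B_j)$ (where $\mu := P-Q$) can be summed to recover $d_{TV}(P,Q)$. Take the Jordan decomposition $\mu = \mu^+ - \mu^-$. Since $Q$ is supported on $X := \{x_1,\ldots,x_k\}$ and $P \geq 0$, the negative part $\mu^-$ is concentrated on $\{x_i : i \in I_-\}$, where $I_- := \{i : P(\{x_i\}) < Q(\{x_i\})\}$; in particular $|I_-| \leq k$, and $\mu^+$ assigns zero mass to $\{x_i : i \in I_-\}$. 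Since $\mu(\RR^d) = 0$, one has $d_{TV}(P,Q) = \mu^+(\RR^d)$.

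The natural choice of test set is $B_j := H_j \setminus \{\pi_{H_j}(x_i) : i \in I_-\}$, i.e., punch out the projections of the negative atoms. Setting $f_j := \I_{B_j}\circ \pi_{H_j}$, the target inequality rearranges to
\[
\sum_{j=1}^{k+1} d_{TV}(P_{H_j},Q_{H_j}) \;\geq\; \sum_j \mu_{H_j}(B_j) \;=\; \int \Big(\sum_j f_j\Big)\,d\mu \;\geq\; d_{TV}(P,Q).
\]
The final step would follow from two pointwise claims: (i) $f_j(z)=0$ whenever $z \in \{x_i : i \in I_-\}$, which is immediate since $\pi_{H_j}(x_i) \notin B_j$ for $i \in I_-$; and (ii) $\sum_j f_j(z) \geq 1$ for every $z$ outside $\{x_i : i \in I_-\}$. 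Claim (i) annihilates the $\mu^-$-contribution to the integral, while (ii) bounds the $\mu^+$-contribution below by $\mu^+(\RR^d) = d_{TV}(P,Q)$.

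The main obstacle, and the only place where the hypothesis $H_j^\perp \cap H_{j'}^\perp = \{0\}$ enters, is verifying (ii). Fix $z$ with $z \notin \{x_i : i \in I_-\}$, so $z - x_i \ne 0$ for every $i \in I_-$. The hypothesis says each nonzero vector lies in at most one of the subspaces $H_j^\perp$; hence for each $i \in I_-$ at most one index $j$ satisfies $\pi_{H_j}(z) = \pi_{H_j}(x_i)$. A union bound over $i \in I_-$ then shows that $f_j(z) = 0$ for at most $|I_-| \leq k$ values of $j$, leaving at least $(k+1) - k = 1$ index with $f_j(z) = 1$. This pigeonhole is a quantitative incarnation of the argument underlying Heppes' original Theorem~\ref{T:Heppes56}, and the number $k+1$ of subspaces is used precisely here.
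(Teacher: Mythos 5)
Your argument is correct. The combinatorial engine is the same as the paper's --- since a nonzero vector can lie in at most one of the $H_j^\perp$, at most $k$ of the $k+1$ fibres $z+H_j^\perp$ through a given point can meet a prescribed set of at most $k$ points, so at least one index $j$ survives --- but the measure-theoretic packaging is genuinely different. The paper first proves a pointwise bound (Lemma~\ref{L:teo1}) and then, for an \emph{arbitrary} Borel set $B$, partitions $B$ into pieces $B_j$ contained in the regions $A_j=\{x:(x+H_j^\perp)\setminus\{x\}\cap\supp Q=\emptyset\}$, exploiting $Q(B_j+H_j^\perp)=Q(B_j)$ on each piece; the bound for $Q(B)-P(B)$ is then recovered by passing to complements. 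You instead apply the Hahn--Jordan decomposition of $\mu=P-Q$ to single out the at most $k$ ``negative atoms'' $\{x_i:i\in I_-\}$, and produce one explicit test set per subspace, $B_j=H_j\setminus\{\pi_{H_j}(x_i):i\in I_-\}$, reducing the theorem to the pointwise inequality $\I_{\RR^d\setminus\{x_i:i\in I_-\}}\le\sum_j f_j$ together with the vanishing of $\sum_j f_j$ on the negative atoms. Your route yields explicit witnessing sets on each $H_j$ and makes completely transparent where the count $k+1$ enters (only the negative atoms need to be dodged, and there are at most $k$ of them); the paper's route avoids signed measures altogether, and its intermediate Lemma~\ref{L:teo1} is recorded as a result of independent interest. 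The only step you gloss over is the (routine) verification that $\mu^-$ is carried by $\{x_i:i\in I_-\}$ and that $\mu^+$ vanishes there: this follows because every Borel set disjoint from $\supp Q$ has $\mu\ge 0$, so a Hahn negative set for $\mu$ may be taken inside $\{x\in\supp Q:\mu(\{x\})<0\}$.
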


The main idea of the proof is embodied in the following lemma,
which is a useful result in its own right.
	
	\begin{lemma}\label{L:teo1}
	Let $P,Q$ and $H_1,\dots,H_{k+1}$ be as in the statement of the theorem. 
	Then
	\begin{equation}\label{E:pmayorq}
	P(\{x\})-Q(\{x\})\le \max_{1\le j\le k+1} \Bigl(P_{H_j}(\{\pi_{H_j}(x)\})-Q_{H_j}(\{\pi_{H_j}(x)\})\Bigr) \quad(x\in \RR^d).
	\end{equation}
	\end{lemma}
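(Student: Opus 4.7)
The plan is to interpret the projected measures of a point as measures of an affine slice, then exploit a pigeonhole argument using the $k+1$ subspaces against the at most $k$ support points of $Q$.

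First I would note that, for any Borel probability measure $\mu$ on $\RR^d$ and any $x \in \RR^d$,
\[
\mu_{H_j}(\{\pi_{H_j}(x)\}) = \mu\bigl(\pi_{H_j}^{-1}(\pi_{H_j}(x))\bigr) = \mu(x+H_j^\perp).
\]
So both sides of \eqref{E:pmayorq} are really comparisons between $\mu(\{x\})$ and $\mu$-measures of the affine subspaces $x+H_j^\perp$ through $x$ orthogonal to $H_j$, applied to $\mu=P$ and $\mu=Q$.

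Next, I would fix $x \in \RR^d$ and list the support of $Q$ as $\{q_1,\dots,q_m\}$ with $m\le k$. The crux is to find an index $j_0$ such that the slice $x+H_{j_0}^\perp$ meets $\supp(Q)$ in no point other than possibly $x$ itself. For this, observe that for any support point $q_i\ne x$, the set of indices $j$ with $q_i\in x+H_j^\perp$, i.e.\ with $q_i-x\in H_j^\perp$, has cardinality at most one: if $q_i-x$ lay in both $H_j^\perp$ and $H_\ell^\perp$ with $j\ne\ell$, the hypothesis $H_j^\perp\cap H_\ell^\perp=\{0\}$ would force $q_i=x$. Thus the support points different from $x$ can each "block" at most one of the $k+1$ subspaces, and since there are at most $k$ such blockers, some index $j_0$ survives.

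For this $j_0$, the slice $x+H_{j_0}^\perp$ contains no support point of $Q$ other than possibly $x$, so
\[
Q_{H_{j_0}}(\{\pi_{H_{j_0}}(x)\}) = Q(x+H_{j_0}^\perp) = Q(\{x\}).
\]
On the other hand, since $x\in x+H_{j_0}^\perp$, monotonicity gives
\[
P_{H_{j_0}}(\{\pi_{H_{j_0}}(x)\}) = P(x+H_{j_0}^\perp) \ge P(\{x\}).
\]
Subtracting yields $P(\{x\})-Q(\{x\}) \le P_{H_{j_0}}(\{\pi_{H_{j_0}}(x)\})-Q_{H_{j_0}}(\{\pi_{H_{j_0}}(x)\})$, which is at most the maximum in \eqref{E:pmayorq}.

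The main obstacle is conceptual rather than technical: recognizing that the hypothesis $H_i^\perp\cap H_j^\perp=\{0\}$ is precisely what limits each non-$x$ support point of $Q$ to obstructing at most one of the $k+1$ candidate subspaces, so that a good $j_0$ is guaranteed by a clean pigeonhole count. Once this is seen, the inequality is essentially immediate.
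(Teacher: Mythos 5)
Your proof is correct and is essentially the paper's own argument: the observation that each support point of $Q$ other than $x$ can lie in at most one of the affine slices $x+H_j^\perp$ is exactly the paper's statement that the sets $(x+H_j^\perp)\setminus\{x\}$ are pairwise disjoint, and the ensuing pigeonhole count and the final two-line inequality are identical to the published proof.
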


\begin{proof}
	Let $x\in \RR^d$. Since the $(k+1)$ sets $(x+H_j^{\perp})\setminus\{x\}, ~(j=1,2,\dots,k+1)$ are pairwise disjoint,
	and since $Q$ is supported on a set containing at most $k$ points, 
	there exists a $j$ such that $(x+H_j^{\perp})\setminus\{x\}$ is disjoint from the support of $Q$.
	Hence
\[	
Q(x+H_j^\perp)=Q(\{x\}).
\]
We then have
	\begin{align*}
	P(\{x\})-Q(\{x\})
	&=P(\{x\})- Q(x+H_i^{\perp})\\
	&\le P(x+H_j^{\perp})- Q(x+H_j^{\perp})\\
	&=P_{H_j}(\{\pi_{H_j}(x)\})-Q_{H_j}(\{\pi_{H_j}(x)\}).
	\end{align*}
	This establishes \eqref{E:pmayorq}.

\end{proof}

\begin{proof}[Proof of Theorem~\ref{teo1}]
For each $j\in\{1,2,\dots,k+1\}$, let $A_j$ be the set of $x\in\RR^d$ such that
$(x+H_j^{\perp})\setminus\{x\}$ is disjoint from the support of $Q$.
The proof of  Lemma~\ref{L:teo1} shows that $\cup_{j=1}^{k+1}A_j=\RR^d$.
Also, if $B_j$ is a Borel subset of $A_j$, then $(B_j+H_j^\perp)\setminus B_j$ is contained in
$\cup_{x\in A_j}\bigl((x+H_j^\perp)\setminus\{x\}\bigr)$, which is disjoint from the support of $Q$,  and hence
\[
Q(B_j+H_j^\perp)=Q(B_j).
\]

Now let $P$ be a Borel probability measure on $\RR^d$,
and let $B$ be an arbitrary Borel subset of $\RR^d$.
Set $B_1:=B\cap A_1$ and $B_j:=B\cap (A_j\setminus A_{j-1})$ for $j=2,\dots,k+1$. 
Then $B_1,\dots,B_{k+1}$ is a Borel partition of $B$ such that $B_j\subset A_j$ for all $j$. Hence
\begin{align*}
P(B)-Q(B)
&=\sum_{j=1}^{k+1}\Bigl(P(B_j)-Q(B_j)\Bigr)
=\sum_{j=1}^{k+1}\Bigl(P(B_j)-Q(B_j+H_j^\perp)\Bigr)\\
&\le\sum_{j=1}^{k+1}\Bigl(P(B_j+H_j^\perp)-Q(B_j+H_j^\perp)\Bigr)\\
&=\sum_{j=1}^{k+1}\Bigl( P_{H_j}(\pi_{H_j}(B_j))-Q_{H_j}(\pi_{H_j}(B_j))\Bigr)\\
&\le \sum_{j=1}^{k+1}d_{TV}(P_{H_j},Q_{H_j}).
\end{align*}
Also, since $P,Q$ are both probability measures, we have
\[
Q(B)-P(B)=P(\RR^d\setminus B)-Q(\RR^d\setminus B)\le  \sum_{j=1}^{k+1}d_{TV}(P_{H_j},Q_{H_j}).
\]
Finally, combining  the last two inequalities, we obtain \eqref{E:quantHeppes}.

\end{proof}

\subsection{A Heppes-type result for distributions with pre-determined support}\

If we know in advance the location of the supports of $P$ and $Q$, rather than merely their cardinality,
then it suffices to use just one judiciously chosen subspace $H$. 
This is even true if $P,Q$ have countable supports, 
as the following result shows.

\begin{proposition}\label{unsoloH}
	Let $P,Q$ be probability measures on $\RR^d$ supported on a countable set $E\subset\RR^d$.
	Let $H$ be a subspace of $\RR^d$ such that $H^\perp\cap(E-E)=\{0\}$.
	Then 
	\[
	d_{TV}(P,Q)=d_{TV}(P_H,Q_H).
	\] 
	In particular, $P_H=Q_H$ implies $P=Q$.
\end{proposition}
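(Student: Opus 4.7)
The plan is to reformulate the hypothesis $H^\perp\cap(E-E)=\{0\}$ as the statement that the orthogonal projection $\pi_H$, restricted to $E$, is injective. Indeed, if $x,y\in E$ satisfy $\pi_H(x)=\pi_H(y)$, then $x-y\in H^\perp\cap(E-E)$, forcing $x=y$. Equivalently, for every $x\in E$ the fibre $(x+H^\perp)\cap E$ reduces to $\{x\}$. This is the one structural fact that does all the work; the rest is bookkeeping with countable sums.

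Next, I would use this injectivity to show that the projections preserve the masses of individual atoms: for each $x\in E$,
\[
P_H(\{\pi_H(x)\})=P(\pi_H^{-1}(\{\pi_H(x)\}))=P(x+H^\perp)=P\bigl((x+H^\perp)\cap E\bigr)=P(\{x\}),
\]
where the third equality uses that $P$ is supported on $E$. The identical computation yields $Q_H(\{\pi_H(x)\})=Q(\{x\})$. It follows that $P_H$ and $Q_H$ are both supported on the countable set $\pi_H(E)$.

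For the conclusion, I would invoke the standard formula for total variation distance between discrete measures on a countable support:
\[
d_{TV}(P,Q)=\frac{1}{2}\sum_{x\in E}|P(\{x\})-Q(\{x\})|,
\qquad
d_{TV}(P_H,Q_H)=\frac{1}{2}\sum_{y\in\pi_H(E)}|P_H(\{y\})-Q_H(\{y\})|.
\]
Since $\pi_H|_E$ is a bijection between $E$ and $\pi_H(E)$, and corresponding atoms carry equal mass under $P$ and $P_H$ (respectively $Q$ and $Q_H$), the two sums coincide, giving $d_{TV}(P,Q)=d_{TV}(P_H,Q_H)$. The final assertion $P_H=Q_H\Rightarrow P=Q$ is then immediate.

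There is no serious obstacle here; the only mildly delicate point is ensuring that the measures concentrated on a countable set indeed admit the $\ell^1$-style representation of $d_{TV}$, but this is standard once one notes that the supremum in the definition of $d_{TV}$ over arbitrary Borel sets is attained on the discrete set $\{x\in E:P(\{x\})>Q(\{x\})\}$ (and analogously for the projected measures on $\pi_H(E)$). All the geometric content is concentrated in the injectivity step, which is why a single well-chosen subspace $H$ suffices.
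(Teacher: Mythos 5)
Your proof is correct. The geometric core --- that $H^\perp\cap(E-E)=\{0\}$ makes $\pi_H|_E$ injective, so that for $x\in E$ the fibre $(x+H^\perp)\cap E$ is just $\{x\}$ --- is exactly the lemma the paper isolates. Where you diverge is in the bookkeeping: the paper works at the level of arbitrary Borel sets, showing via injectivity that $P(B)=P_H(\pi_H(E\cap B))$ for every Borel $B$, which immediately gives $|P(B)-Q(B)|\le d_{TV}(P_H,Q_H)$ and hence $d_{TV}(P,Q)\le d_{TV}(P_H,Q_H)$; the reverse inequality is the trivial fact that projection cannot increase total variation. You instead work atom by atom, showing $P_H(\{\pi_H(x)\})=P(\{x\})$ and then matching the two $\ell^1$ sums through the bijection $\pi_H|_E\colon E\to\pi_H(E)$. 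Your route buys a slightly more transparent statement (the projected measures are literally relabelled copies of the originals, atom for atom) at the cost of invoking the $\frac12\sum|P(\{x\})-Q(\{x\})|$ representation of $d_{TV}$ for countably supported measures --- a standard fact, which the paper itself records in its discussion of metrics, and which you correctly flag and justify. The paper's version avoids that formula entirely and never needs the reverse inequality to be argued, only observed. Both arguments are complete; there is no gap in yours.
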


\begin{remark}
We can even choose $H$ to have dimension equal to $1$. Indeed,
this amounts to ensuring that the hyperplane $H^\perp$ misses the countable set $E-E$.
\end{remark}

\begin{lemma} 
	Let $E$ be a  countable subset of $\RR^d$
	and let $H$ be a subspace of $\RR^d$ with the property that $H^\perp\cap(E-E)=\{0\}$. 
	Then the restriction of  $\pi_H$ to $E$ is injective.
\end{lemma}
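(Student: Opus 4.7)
The plan is a one-line argument: I will unpack the definition of injectivity and use the fact that $\pi_H$ is the orthogonal projection, so its kernel is exactly $H^\perp$. Suppose $x, y \in E$ satisfy $\pi_H(x) = \pi_H(y)$. Then by linearity $\pi_H(x-y) = 0$, so $x - y \in H^\perp$. On the other hand, $x - y \in E - E$ by construction. Combining these two memberships with the hypothesis $H^\perp \cap (E-E) = \{0\}$ yields $x - y = 0$, i.e.\ $x = y$. This is exactly injectivity.

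There is really no obstacle here; the countability assumption plays no role in this direction of the argument and will instead be used (via a dimension/measure count over the hyperplane $H^\perp$) to guarantee that such a subspace $H$ exists for the subsequent application in Proposition~\ref{unsoloH}. I would mention this explicitly so the reader does not wonder why countability is listed in the hypotheses of the lemma. After proving the lemma, the natural follow-up in the proof of Proposition~\ref{unsoloH} will be to observe that the injective map $\pi_H|_E$ sets up a bijection between $E$ and $\pi_H(E)$ that carries atoms of $P$ and $Q$ to atoms of $P_H$ and $Q_H$ of the same mass, from which the equality $d_{TV}(P,Q) = d_{TV}(P_H,Q_H)$ follows immediately by summing $|P(\{x\}) - Q(\{x\})|$ over $E$.
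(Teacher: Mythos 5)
Your argument is correct and is essentially identical to the paper's: both deduce from $\pi_H(x)=\pi_H(y)$ that $x-y\in H^\perp\cap(E-E)=\{0\}$. Your added remark that countability is not used in the lemma itself (only in guaranteeing that a suitable $H$ exists) is accurate and a reasonable clarification.
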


\begin{proof}
	If $x,y\in E$  and $\pi_H(x)=\pi_H(y)$, then $x-y\in (E-E)\cap H^\perp=\{0\}$, so $x=y$.
\end{proof}

\begin{proof}[Proof of Proposition~\ref{unsoloH}]
Let $B$ be a Borel subset of $\RR^d$. 
From the lemma, we have $E\cap B=E\cap \pi_H^{-1}\pi_H(E\cap B)$, so
\begin{align*}
P(B)&=P(E\cap B)=P(E\cap \pi_H^{-1}\pi_H(E\cap B))\\
&=P(\pi_H^{-1}\pi_H(E\cap B))=P_H(\pi_H(E\cap B)),
\end{align*}
and likewise for $Q$. Hence
\[
|P(B)-Q(B)|=|P_H(\pi_H(E\cap B))-Q_H(\pi_H(E\cap B))|\le d_{TV}(P_H,Q_H).
\]
It follows that $d_{TV}(P,Q)\le d_{TV}(P_H,Q_H)$. The reverse inequality is obvious,
so we have equality.
\end{proof}
 
\textbf{A simple illustration of Proposition \ref{unsoloH}}. 
Suppose that we have a random vector $(X_1,X_2) \in \mathbb{R}^2$, 
with $X_1 \sim \textrm{Ber}(p_1)$ and $X_2 \sim \textrm{Ber}(p_2)$. 
In this case, $E=\{(0,0), (1,0),(0,1),(1,1)\}$. 
We consider one-dimensional subspaces $H$, 
and we look at the orthogonal projections of the points in the support $E$ on $H$, see Figure~\ref{fig1}.
There are only $4$ subspaces (in red in Figure~\ref{fig1}) that do not fulfill the condition  $H^\perp\cap(E-E)=\{0\}$, 
namely those given by  $\{(x,y) \in \mathbb{R}^2, \  ax+by=0\}$ with $(a,b) \in \{(1,0),(1,1), (0,1), (1,-1)\}$.

\begin{figure}[htb]  
\centering
\includegraphics[width=45mm]{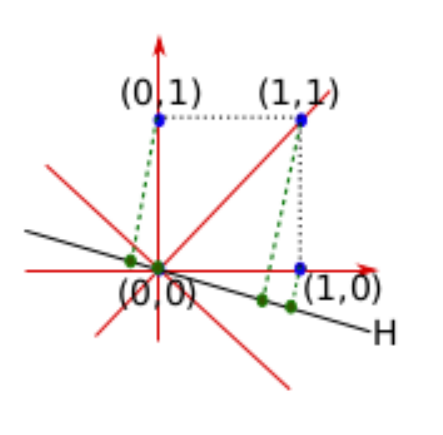}
\caption{Illustration of Proposition \ref{unsoloH}.} \label{fig1}  
\end{figure}

\subsection{Relation with other metrics.}\

Theorem~\ref{teo1} and  Proposition~\ref{unsoloH} are expressed in terms
of the total variation metric $d_{TV}$. For discrete distributions $P,Q$ on $\RR^d$, 
this metric is also given by
\begin{align*}
d_{TV}(P,Q)&= \frac{1}{2} \sum_x \Bigl| P(\{x\})- Q(\{x\})\Bigr|\\
&= \inf \Bigl\{\mathbb P( X \neq Y) : (X,Y)  \text{~such that~} X \sim P\text{~and~}Y \sim Q\Bigr\}.
\end{align*}

There are several other commonly used metrics on the space of probability measures;
see \cite{gibbs2002} for an account of these.
We mention one that will be used in what follows.

The \emph{Wasserstein--Kantorovich metric} of order $p \geq 1$,
defined for distributions on $\RR^d$ belonging to $L^p$,  is given by
\[
d_{W,p}(P,Q):=\inf\Bigl\{ \mathbb{E}(|X-Y|^p)^{1/p}:  (X,Y)  \text{~such that~} X\sim P \text{~and~}Y\sim Q\Bigr\}.
\]
For one-dimensional distributions, an equivalent formulation is given by
\begin{equation} \label{Wass}
d_{W,p}(P,Q) = \left(\int_0^1 \vert F^{-1}(t)- G^{-1}(t)\vert ^p \right)^{1/p},
\end{equation}
where $F$ and $G$ stands for the distribution functions of $P$ and $Q$.

From \cite[Theorem~4]{gibbs2002}, 
if $P,Q$ are both supported on a bounded set $E\subset\RR^d$ of diameter $\diam(E)$, then
\[
d_{W,1}(P,Q)\le \diam(E) d_{TV}(P,Q),
\]
and, if $E$ is finite and $d_{\min}(E)$ denotes the minimum separation between points of $E$, then
\begin{equation}\label{TVW}
d_{W,1}(P,Q)\ge d_{\min}(E) d_{TV}(P,Q).
\end{equation}
It follows that, if $(P_n)$ is a sequence of probability distributions all supported on the same finite set,
then $P_n$ converges to $P$ weakly if and only if $d_{TV}(P_n,P)\to0$.

For the case of discrete tomography problem we will consider  Mallow's $L^2$-distance  between histograms \cite{mallows1972}, based on the Wasserstein distance given in (\ref{Wass}).} For histograms, the article \cite{irpino2006} provides an exact calculation of this distance by an efficient algorithm.

\section{ Classification based on projections and discrete tomography }\label{learning}

Discrete tomography, a term introduced by Larry Shepp in 1994, focuses on the reconstruction of binary (black and white) images based on a finite set of projections of the data, see for instance \cite{gardner96bb}. 

Let $E\subset \ZZ^d$ be the domain of the binary images, let $N:=\card(E)$,
and let $F$ be a subset of $E$. In this setting we only have access to the information given on parallel lines orthogonal to certain directions $u$ of the images. More precisely, given a direction $u$ in the unit sphere, the $X$-ray of the set $F$ in direction $u$ provides the number of points in the set on lines parallel to $u$, that is, if $L_u$ stands for the line through the origin parallel to $u$, we are given the function
\begin{equation} \label{tomography}
T_u(F)(x):= \card(F \cap (x+L_u)), \ \ x \in \RR.
\end{equation}

Given a direction $u$, we estimate $T_u(F)(x)$ based on an histogram built up from a partition of $\RR$  $B_1,\ldots,B_m$ as follows:  for $x \in B_j$,  define
\[
\hat{T}_u(F)(x):= \card \left \{  \cup_{y\in B_j}(F \cap (y+L_u))  \right \}.
\]
So we have the histogram of the points determined by the binary image of the points in the direction $u$, see Figure \ref{fig2}.
 
\begin{figure}[htb]  
\centering
\includegraphics[width=65mm]{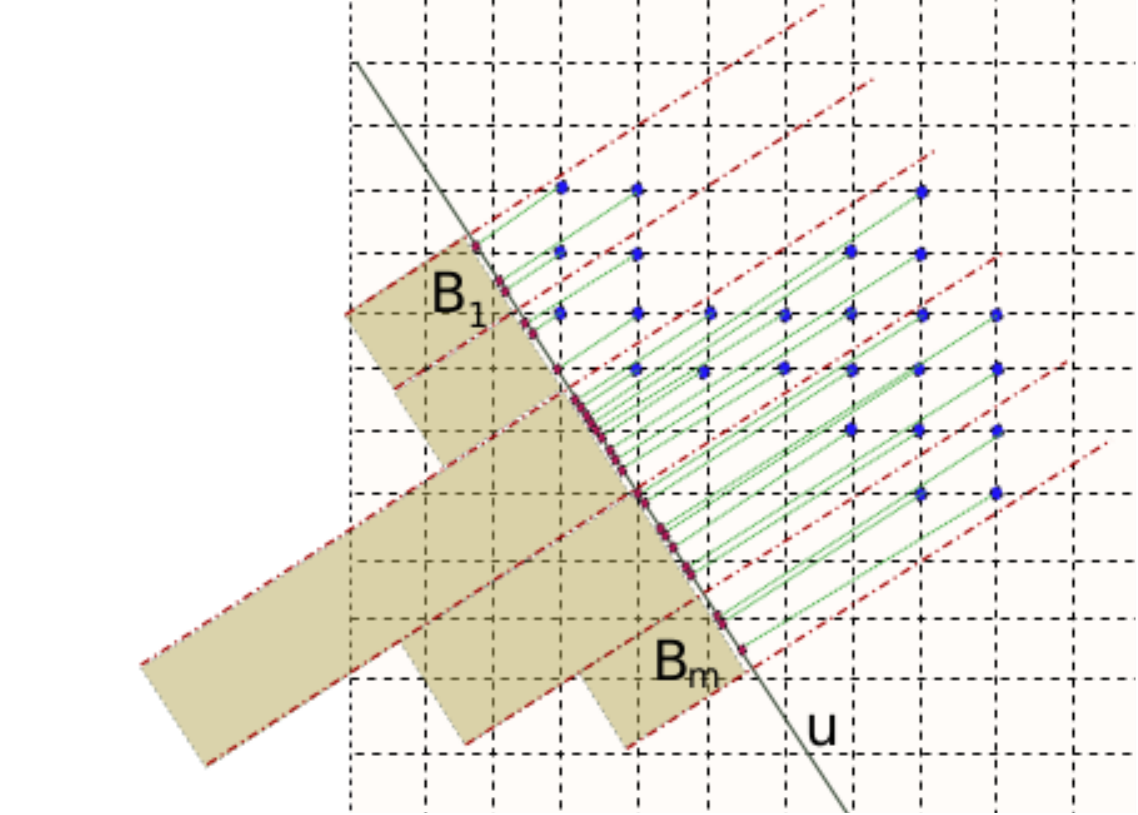}
\caption{Representation of the histogram of the projection of the data in the direction $u$.} \label{fig2}  
\end{figure}

\subsection{An algorithm with random projections}

First we consider the following general classification problem into $m$ classes: given an iid training sample, 
$\mathcal{D}^n=(\mathbf X_n,\mathbf Y_n)=\hspace{-0.1cm}\{(X_1,Y_1),\dots,(X_n,Y_n)\}$  
with the same distribution as  $(X,Y)\in E\times \{1, \ldots, m\}$, we want to classify a new data $X_{n+1}$ for which the label $Y_{n+1}$ is unknown.
Let $P_l, \ l=1, \ldots, m$ stand for the unknown discrete distributions of $X \vert Y=l$  and let $P_{nl}$ be the respective empirical distributions based on the training sample.

Let  $\eta(x)$ denote the conditional mean of $Y$ given $X=x$; namely, $\eta(x)= \mathbb{E}(Y|X=x)$. It is well known that the optimal rule for classifying a single new datum $X$ is given by the Bayes' rule, which, for binary classification, reduces to $g^*(X)=\mathbb{I}_{\{\eta(X)\geq 1/2\}}$. 
Bayes' rule for finitely supported distributions to classify $x$ corresponds to defining
\begin{equation}
\begin{aligned}
g^*(x) = \tilde Y :&= \arg \max_{l=1,\ldots,m} \frac{P(X=x \vert Y=l)P(Y=l)}{P(X=x)} \\
&= \arg \max_{l=1,\ldots,m} \frac{P_l(X=x)P(Y=l)}{P_n(X=x)},
\end{aligned}
\end{equation}
and a plug-in rule is given by
\begin{equation} \label{plugin}
g_n(X_{n+1}, \mathcal D_n)= \hat Y_{n+1}  := \arg \max_{l=1,\ldots,m} \frac{P_{nl}(X=x)P_n(Y=l)}{P_n(X=x)},
\end{equation}
which, for the binary classification case, reduces to
\begin{equation}\label{binary1}
\begin{aligned} 
g^*(x)&=\mathcal I_{\{P_1(X=x)P(Y=1)> P_0(X=x)P(Y=0)\}}; \\
 g_n(x)&=\mathcal I_{\{P_{n1}(X=x)P_n(Y=1)> P_{n0}(X=x)P_n(Y=0)\}}.
\end{aligned}
\end{equation}

Since we only have access to the projections of the distributions, our first approach is to make use of the relation (\ref{E:pmayorq}).
Let $\mathcal D^{nl}_j, \ l=1, \ldots m, \ j=1, \ldots k+1$ be the projections of the training sample with label $l$ on the subspace $H_j$, and let $X_{n+1,j}$ be the projection of a new data on $H_j$. To simplify the notation, we will consider the binary classification problem, $m=2$.

A possible  classification procedure will be to find under which member of the family of empirical distributions $P_{nj}, \ j =1, \ldots, m$ the new data $X_{n+1}$ ``fits better".  For that purpose, we  consider the distributions $\tilde P_{j(n+1)}, \ j=1, \ldots, m$ defined by just adding the data $X_{n+1}= x$ to each one of the $j$ empirical distributions. Next we consider the vector of distances $d_{TV}(P_{nj}, \tilde P_{(n+1)j}), \ j=1, \ldots m$ and we classify the point $X_{n+1}$ into the class $j$ for which the distance is minimal. We will use this last approach in what follows. Consistency is shown in the next theorem, where, for notational simplicity, we consider the binary classification problem, where now $\mathbb E(Y \vert X=x)= P(Y=1 \vert X=x)$. Let $L^* = P(g^*(X)\neq Y)$ be the Bayes risk, i.e.\ the risk of the best classifier, and let $L_n= L(g_n)= P(g_n(X, \mathcal D_ n \neq Y \vert \mathcal D_n))$. 
Recall that the classifier is weakly consistent if
\[
\mathbb E(L_n)= P(g_n(X, \mathcal D_n)\neq Y) \to L^*, \ \mbox{as} \ n \to \infty,
\]
and strongly consistent if
\[
\lim_{n \to \infty} L_n = L^* \ \ \mbox{with probability} \  1.
\]
See for instance \cite{devroye1996} for further details. Since (\ref{TVW}) implies that 
$d_{TV}(P_{n0},P_0)$ and $d_{TV}(P_{n1}, P_1)$ converge to zero a.s.,  
it is clear that the plug-in classification rule given by equation (\ref{binary1}) is strongly consistent.

Next we will prove the consistency of the new proposal rule given by:
\begin{equation} \label{reglanueva} 
g_n(x_{n+1}):= 
\begin{cases}
1-i, \ i=0,1, &\text{if~} P_{ni}(x_{n+1})=0,\\
\mathcal I_{\{d_{TV}(P_{n1}, \tilde P_{(n+1)1}) < d_{TV}(P_{n0}, \tilde P_{(n+1)0})\}}, &\text{otherwise}.
\end{cases}
\end{equation}

Observe that the new rule (\ref{reglanueva}) does not require us to have any notion of distance in the space of covariates $E$.

\begin{theorem}\label{teocon}
Let	 $\mathcal{D}^n=(\mathbf X_n,\mathbf Y_n)=\hspace{-0.1cm}\{(X_1,Y_1),\dots,(X_n,Y_n)\}$  
be an iid training sample with the same distribution as  $(X,Y)\in E\times \{0, 1\}$. 	Then the classification rule given by equation (\ref{reglanueva}) is strongly consistent provided that the training sample satisfies the condition that $Y(x)=1$ for all $x$ such that $P_1(x)> P_0(x)$. 
\end{theorem}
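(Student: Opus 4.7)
The plan is to reduce the rule to an explicit criterion, use the hypothesis to restrict the support geometry of $P_0,P_1$, and conclude by combining the strong law of large numbers with a dominated convergence argument.

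First I would carry out the explicit computation
\[
d_{TV}(P_{nj},\tilde P_{(n+1)j})=\frac{1-P_{nj}(x_{n+1})}{n_j+1}, \qquad j=0,1,
\]
where $n_j$ denotes the number of training points with label $j$. This follows directly from the identity $\tilde P_{(n+1)j}(x)-P_{nj}(x)=(\I_{x=x_{n+1}}-P_{nj}(x))/(n_j+1)$ together with summation over $x$. With this formula, the ``otherwise'' branch of (\ref{reglanueva}) becomes the concrete inequality $(n_0+1)(1-P_{n1}(x_{n+1}))<(n_1+1)(1-P_{n0}(x_{n+1}))$.

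Next, I would extract the structural consequence of the hypothesis. Reading the condition symmetrically for the two labels, $P_{1-j}(x)=0$ whenever $P_j(x)>P_{1-j}(x)$, so the common support $\supp(P_0)\cap\supp(P_1)$ reduces to the set $A:=\{x:P_0(x)=P_1(x)>0\}$. The Bayes classifier $g^*$ therefore equals $1$ on $\supp(P_1)\setminus A$, equals $0$ on $\supp(P_0)\setminus A$, and on $A$ returns the label with larger prior $P(Y=j)$.

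Third, I would establish pointwise almost-sure convergence $g_n(x)\to g^*(x)$ on the support of $X$. For $x\in\supp(P_1)\setminus A$ one has $P_0(x)=0$, so $P_{n0}(x)=0$ identically, the first branch of (\ref{reglanueva}) outputs $1$, and this matches $g^*$; the case $x\in\supp(P_0)\setminus A$ is symmetric. For $x\in A$, by the SLLN one has $P_{nj}(x)\to P_j(x)>0$ and $n_j/n\to P(Y=j)$ almost surely, so both empirical masses at $x$ are eventually positive and the ``otherwise'' branch applies; dividing the Step 1 inequality by $n$ and passing to the limit yields $P(Y=0)(1-P_1(x))<P(Y=1)(1-P_0(x))$, which on $A$ (where $P_0=P_1$) simplifies to $P(Y=0)<P(Y=1)$, precisely the Bayes decision on $A$.

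Finally, I would conclude $L_n\to L^*$ almost surely by writing
\[
L_n=\sum_x\Bigl(P(Y=0)P_0(x)\I_{\{g_n(x)=1\}}+P(Y=1)P_1(x)\I_{\{g_n(x)=0\}}\Bigr)
\]
and combining the pointwise convergence from Step 3 with dominated convergence, using the summable bound $P(X=x)$. The main obstacle I foresee is the degenerate case $x\in A$ with $P(Y=0)=P(Y=1)$: here the asymptotic ratio in Step 3 equals exactly $1$ and $g_n(x)$ may oscillate, but the error probability at such a point is $1/2$ irrespective of the assigned label, so the contribution to $L_n$ still matches the Bayes contribution and the overall convergence is unaffected.
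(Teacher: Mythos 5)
Your argument is essentially correct (under your reading of the hypothesis) and is more explicit than the paper's own, but it takes a genuinely different route. The paper never computes $d_{TV}(P_{nj},\tilde P_{(n+1)j})$ in closed form; it replaces the empirical measures by the true ones via $d_{TV}(P_{nj},P_j)\to 0$ and then argues that, at a point with $P_1(x)>P_0(x)$, the quantity $d_{TV}(P_1,\tilde P_{(n+1)1})$ tends to $0$ while $d_{TV}(P_0,\tilde P_{(n+1)0})$ stays bounded below by roughly $P_0(x)/4$ (a bound that itself uses the hypothesis through $P_{n0}(x)=0$). Your identity $d_{TV}(P_{nj},\tilde P_{(n+1)j})=(1-P_{nj}(x))/(n_j+1)$ is the real gain: it turns the rule into the explicit asymptotic criterion $P(Y=0)(1-P_1(x))<P(Y=1)(1-P_0(x))$, which is what makes the pointwise comparison with the Bayes rule, the treatment of the overlap set $A$ and of the tie $P(Y=0)=P(Y=1)$, and the final risk decomposition possible. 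The paper only verifies the direction ``$P_1(x)>P_0(x)$ implies output $1$'' and asserts that this suffices; your analysis makes visible why the complementary direction is not automatic.

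The one point you must flag more honestly is the ``symmetric'' reading of the hypothesis, i.e.\ assuming also that $P_1(x)=0$ whenever $P_0(x)>P_1(x)$. The theorem states the condition only for label $1$, and your Step~3 genuinely needs the symmetric version: if there is a point with $0<P_1(x)<P_0(x)$ and the priors are unequal, the limiting criterion $P(Y=0)(1-P_1(x))<P(Y=1)(1-P_0(x))$ can disagree with the Bayes criterion $P(Y=1)P_1(x)>P(Y=0)P_0(x)$ --- for instance $P(Y=1)=0.9$, $P_0(x)=0.85$, $P_1(x)=0.05$ gives $0.095<0.135$ (rule says $1$) but $0.045<0.085$ (Bayes says $0$), with $\eta(x)\ne 1/2$ --- so the rule is then inconsistent at $x$. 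Thus the symmetric condition is not a reading but a necessary strengthening when the priors differ (with equal priors the limiting criterion reduces to $P_1(x)>P_0(x)$ and the issue disappears); state it as an explicit extra assumption. A last, very degenerate case you pass over is $P_0(x)=P_1(x)=1$, where both sides of your limiting inequality vanish and the limit argument in Step~3 is indeterminate.
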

\begin{proof}
	First observe that since the support is finite, if $\tilde P_{n0}(x_{n+1})=0$ for $n$ large enough, then $P_0(x_{n+1})=0$, so we can replace  $P_{ni}, \ i=0,1$ by $P_{i}$ in the top line of (\ref{reglanueva}).
Next observe that it suffices to show that, if 
	\begin{equation} \label{hip}
	\mathcal I_{\{P_1(X=x)> P_0(X=x)\}} =1,
	\end{equation}
	then $\lim_{n \to \infty} \mathcal I_{\{d_{TV}(P_{n1}, \tilde P_{(n+1)1}) < d_{TV}(P_{n0}, \tilde P_{(n+1)0})\}} = 1 \ \mbox{with probability 1}$. 
	
	Since $d_{TV}(P_{n0}, P_0)$ and $d_{TV}(P_{n1}, P_1)$ converge to 0 a.s., the problem reduces to showing that equation (\ref{hip}) implies that 
	\[
	\lim_{n \to \infty} \mathcal I_{\{d_{TV}(P_1, \tilde P_{(n+1)1}) < d_{TV}(P_0, \tilde P_{(n+1)0})\}} = 1
	\]
	with probability 1.
	
	We only have to consider the case where    $P_0(x_{n+1})\neq 0$. We have $d_{TV}(P_1, \tilde P_{(n+1)1} )\to 0$ a.s., while, with probability one,
\[
d_{TV}(P_0, \tilde P_{(n+1)0}) >  \frac{1}{2}\Bigl| P_0(x_{n+1})- \frac{1}{(n_0+1)} \Bigr| > P_0(x_{n+1})/4
\]
for all $n$ large enough. 
\end{proof}

Since we only have access to the projections of the distributions, we make use of the bound based on projections given in the inequality (\ref{E:quantHeppes}) and replace the distances by their bounds. 

Let $\mathcal D^{nl}_j, \ l=1, \ldots m, \ j=1, \ldots k+1$ be the projections of the training sample with label $l$ on the subspace $H_j$ and let $X_{n+1,j}$ be the projection of a new datum on $H_j$.

\begin{center}
	\begin{algorithm}[H]
	\label{alg1}
		\SetAlgoLined
		Input $H_1, \ldots, H_{k+1}$ ; $\mathcal D^{nl}_j, \ l=1, \ldots m, \ j=1, \ldots k+1$ and $X_{n+1,j} \ j=1 \ldots k+1$ \;
		For each pair $(j,l)$, calculate the empirical distribution of $\mathcal D^{nl}_j$, $P_{\mathcal D^{nl}_j}$  \;
		Add the new projected data $X_{n+1,j}$ to each pair $(j,l)$ of distributions $P_{\mathcal D^{nl}_j}$ and obtain $P_{\mathcal D^{(n+1)l}_j}$ \;
		
			Calculate  $d_b(l):= \max_{j=1, \ldots, k+1}d(P_{\mathcal D^{nl}_j},P_{\mathcal D^{(n+1)l}_j} ), $ for $l=1, \dots m$,
			where $d$ is a distance between the empirical distributions in $\mathbb{R}$ ;
	
		Output $Y_{n+1}=l_s = \arg\min_{l=1, \ldots m}d_b(l)$.
		
		\caption{Classification algorithm for learning from projections.}
	\end{algorithm}            
\end{center}

\begin{remark} In recent years, there has been a substantial literature on the subject of robustness for learning algorithms focussing on adversarial perturbations of the data (adversarial attacks), see for instance \cite{bertsimas2019,lopez2022}, and the references therein. It treats the question of how changing a small fraction of  data at arbitrary positions affects the learning strategies. Although distinct, this problem is related to the classical notion of breakdown point in robustness.

Among the kinds of perturbations considered,
there are three main different settings: robustness against uncertainty in covariate features  only in the testing sample, robustness against uncertainty only in labels, and robustness against uncertainty in covariate features and in the training sample, the last of these being the wildest. Although this problem   is quite far from the central objective of the present article, it is not difficult to show that algorithm (\ref{reglanueva}) is robust with respect to these kinds  of perturbations.  Indeed, from the proof of  Theorem \ref{teocon}, one can derive that the procedure will be robust at $x$ as long as the fraction $\alpha$ of adversarial perturbation points is smaller than $|P(x)-Q(x)|$.  This condition seems to be mandatory for robustness for any classification rule from discrete data.
\end{remark}

\subsection{Discrete tomography: classification without reconstruction}

In discrete tomography, for each $F$ we only have access to the  values $ T_u(F)(x_i)$ for each direction $u$ on a given grid $\mathbf x=\{x_1, \ldots, x_M\}$.  Once again, for notational simplicity, we  consider the case of binary classification. Let $ \mathcal D^n = \{\mathbb F_n, \mathbb Y_n\}=\{(F_1, Y_1), \ldots, (F_n, Y_n)\}$ (the training sample) with the same distribution as $(F,Y) \in E \times \{0,1\}$. Let $n_1=\sum_{i=1}^n Y_i$ and $n_0 = n-n_1$ be the sizes of each class in the training sample. Denote by $F_{ij}, \ j=1, \ldots n_i, \ i=0,1$, the data at each class according to its label $i$. We want to classify a new data $F$.
	Let   $\mathbf T_u(F_{0j}):= ( T_u(F_{0j})(x_1), \ldots, T_u(F_{0j})(x_M)), \ j=1, \ldots n_0$, and  $\mathbf T_u(F_{1j}):= ( T_u(F_{1j})(x_1), \ldots, T_u(F_{1j})(x_M)), \ j=1, \ldots n_1$, corresponding to direction $u$.	
Then define $\hat{\mathbf T}_u(F_{ij})$  as the histogram associated to the points of $F_{ij}$ projected on the direction $u$. So, to each discrete tomography and direction, we associate the corresponding histogram.

We will denote by $d$ the Mallow $L^2$-distance between histograms. Next, based on this distance, for each direction $u$ we apply the classical $k$-nearest neighbour classification rule and then combine the results for the different directions $u$ to produce the output.
 More precisely, we consider the following algorithm:

\begin{center}
\begin{algorithm}[H] \label{alg4}
\SetAlgoLined
Input directions $u_1, \ldots, u_{k+1}$ \; 
Input ``projected" empirical histograms  $\hat{\mathbf T}_u(F_{ij})$  with  $j=1, \ldots n_i, \, i=0,1$ for each direction $u$ \;
Calculate the histogram corresponding to the new data $F_{n+1}$\;
Let $d$ be a distance between histograms, given the direction $u$, determine $d\left( \hat{\mathbf  T}_u(F_{n+1}), \hat{ \mathbf T}_u(F_{ij})\right)$ for $j=1, \ldots n_i, \ i=0,1$\;
Fixed a number $r$ of neighbours.  By majority vote, the label $Y_{n+1,u}= 0 \textrm{ or } 1$ is assigned to the new observation by the  $k$-nearest neighbour  rule with $k=r$. This procedure is performed for each direction $u$\;
Output $Y_{n+1}=0$  if  $ \frac{1}{k+1} \sum_{s=1}^{k+1}Y_{n+1,u_s} < 1/2$, and $Y_{n+1}=1$ otherwise.
\caption{Classification algorithm for discrete tomography.}
\end{algorithm}            
\end{center}

\subsection{On the number of projections required in practice} 
If the support of the distributions is not known, but only its cardinality $k$, then both Heppes' theorem and our Theorem~\ref{teo1} require the  use of  $(k+1)$ projections. Although this value does not depend on the data dimension $d$ directly,
in practice, for 
high-dimensional binary data, the number of projections may be very large and therefore computationally hard.  According to our experience, if one takes  a sequence of increasing smaller values, the accuracy of the performance on the testing sample increases up to some point and then stabilizes. This suggest using a penalized procedure to select the number of random projections, providing a trade-off between the number of projections and the accuracy of the procedure.


\section{Multivariate binary data}\label{binary}

\subsection{Notation}

Let $\X = (X_1, \ldots, X_d)$ be a random vector such that each coordinate $X_i$, $i=1, \ldots, d$ is a Bernoulli random variable, without any independence assumption about the coordinates $X_i$. The distribution of ${\mathbf{X}}$ is called a multivariate Bernoulli distribution and has been studied by several authors, (see for instance \cite{teugels1990}, \cite{fontana2018}, \cite{marchetti2016}, \cite{euan2020}   \cite{dai2013}, and the references therein), where  nice characterizations of the distribution are given.  Also, the simulation problem for this type of distribution for different correlation structures is addressed in the literature, see for instance \cite{oman2009}, \cite{huber2019}, \cite{jiang2020} and \cite{fontana2021}. 

The random vector $\mathbf X$ takes values in the space
\begin{equation}
\Omega_d := \{0,1\}^d = 
\{ \omega =(y_1, \ldots, y_d), \ y_i \in \{0,1\},  \ i=1, \ldots, d                   \},
\end{equation} 
i.e. the set of $d$-tuples of zeros and ones, where $y_i$ stands for the result of the $i$--th trial.

Let $\Delta_d$ be the set of all possible probabilities on $\Omega_d$, i.e.
$$
\Delta_d := \Bigl\{ (p_1, \ldots, p_{2^d}): p_i \geq 0, \ \ \sum_{i=1}^{2^d} p_i = 1\Bigr\}.
$$

For each probability $p \in \Delta_d$, given by 
$$
\{p(\omega) : \omega \in \Omega_d\},
$$
we can derive the law of each $X_i$, as well as the law of $S_d:= X_1 + \ldots + X_d$, which are given by
\begin{align*}
q_i:= P(X_i=1) &= 
 \sum_{\{\omega \in \Omega_d: y_i=1\}} p(\omega),\\
 P (S_d = k) &= \sum_{\{\omega \in \Omega_d: \sum_{i=1}^d y_i = k\}} p(\omega),
\end{align*}
respectively.

Some nice and more explicit formulae are given in \cite{dai2013}, which can be used to perform a statistical analysis of the data in the case 
where we have a sample of iid vectors $\mathbf X_1, \ldots, \mathbf X_N$ with a multivariate Bernoulli distribution.

\subsection{Testing when we have an iid sample of high dimensional binary data}
\label{test4}

Suppose now that we are given a random  sample $\{\mathbf{X_1},\ldots, \mathbf{X_N}\}$,  
where ${\mathbf{X_i}}= (X_{i1}, \ldots, X_{id})\}$, for $i=1, \ldots, N$, and we want to perform some testing problems regarding this sample. This problem becomes difficult for high-dimensional data.
However, several important problems fall into this category. This is the case, for instance, if we have a set of individuals $N$ and for each of them $X_{ij}$ represents the result of a given medical test (positive or negative), image analysis, or in marketing studies, among many others.

In this setting, since the support of the distributions is known in advance, we can make use of Proposition~\ref{unsoloH},
which reduces the problem to checking a single projection on a appropriate subspace $H$. This will be particularly important in the case where we only have access to the projections of the data, as considered in the next sections.

First we choose a one-dimensional subspace fulfilling the condition in Proposition~\ref{unsoloH}.

\begin{description}
	\item [One-sample problem] Given an arbitrary model $P_0$, and and a sample $\{\mathbf{X_1},\ldots \mathbf{X_N}\}$,  where ${\mathbf{X_i}}= (X_{i1}, \ldots, X_{id})$, for $i=1, \ldots, N$, let $P_{0H}$ be the projection of $P_0$ on $H$, and let $P_{NH}$ be the empirical distribution of the projections of the sample on $H$.
	
	Then, the problem of testing $(P=P_0)$ vs $(P \neq P_0)$, reduces to testing  $(P_{H}= P_{H0})$ vs  $(P_{H}\neq P_{H0})$, which can be handled using the Kolmogorov--Smirnov test or the Cramer--von-Mises test, among other possibilities.
	
	For instance, for the Kolmogorov--Smirnov test, let $KS(P_{NH},P_{H0}):= \sup_x \|F_{NH}(x)- F_{H0}(x)\|$, the Kolmogorov distance between the one-dimensional distribution functions of $P_{NH}$ and $P_{H0}$. Reject the null assumption at level $\alpha$ if  $KS(P_{NH},P_{H0}) > c_{\alpha}$. 
	
	The Kolmogorov--Smirnov test for discrete data has been analyzed by several authors. See for instance \cite{conover1972} and \cite{dimitrova2020}. In the latter article, a software package is available to calculate the  exact critical value $c_{\alpha}$ for discrete distributions.

	\item [Two-sample problem] Given two samples $\{\mathbf{X_1},\ldots \mathbf{X_N}\}$, $\{\mathbf{Y_1},\ldots \mathbf{Y_M}\}$ with distributions $P$ and $Q$ respectively, we want to test $(P=Q)$ vs $(P \neq Q)$. Let $P_{NH}$ be the empirical distribution of the projections of the first  sample on $H$, and let $P_{MH}$ be the empirical distribution of the projections of the second sample on $H$.
	
	Again the problem reduces to testing if $(P_H = Q_H)$ vs $(P_H \neq Q_H)$, which can be handled using the two-sample Kolmogorov--Smirnov test or the Cramer--von-Mises test, among other possibilities.

	In this case, we consider the Kolmogorov distance between the two empirical distribution functions, i.e., $KS(P_{NH},P_{H0}) := \sup_x \| F_{NH}(x)- F_{MH}(x)\|$, and reject the null assumption at level $\alpha$ if $KS(P_{NH},P_{MH}) > c_{\alpha}$.
	\end{description}
	
\noindent  Some practical examples, like testing for the Poisson--Binomial distribution, are considered in Section \ref{simus} below.

\section{Bernoulli series distributions and their sums}\label{unasola}

\subsection{Background}\

The problem we want  to analyze  in this section is 
whether we can still say  something when we have only one realization $\mathbf X$ of a multivariate Bernoulli distribution. 
 In particular, we are interested in characterizing the law of the sum $S_d$ for different distributions $p \in \Delta_d$,
  in particular those for which the law of $S_d$ is close to a Binomial distribution $B(d, 1/2)$ with parameters $d$ and $1/2$.

Let $L_d(p)(k) = P(S_d = k)$.
Given $\epsilon > 0$ let
\begin{equation}\label{bigset}
\Delta(d;\epsilon) := \Bigl\{ p \in \Delta_d: \max_{0 \leq k \leq d} \vert L_d(p)(k) - B(d, 1/2, k) \vert \leq \epsilon\Bigr\},
\end{equation}
the set of probabilities $p \in \Delta_d$ which are uniformly close to  $B(d, 1/2)$.

Next we  choose at random a probability measure $p \in \Delta_d$ according to the normalized Lebesgue measure $\mu$ on the simplex $\Delta_d$, which is related to the notion of physical entropy.

In this setting, we have the following result of \cite{chevallier2011}.

\begin{theorem}[\protect{\cite[Theorem 1]{chevallier2011}}]
There exists a constant $A\le 2$ such that
 \begin{equation}\label{bound} 
 \mu(\Delta(d; \epsilon)) \geq 1 - \frac{A \sqrt{d}}{\epsilon^2 2^{d-1}}
 \end{equation}
 and
 \begin{equation}\label{bound2}
 \mu\Bigl( \{ p \in \Delta_d:  \sup_{I \subset \{1, \ldots, d\}} \vert L_d(p)(I) - B(d, 1/2, I) \vert \leq \epsilon\}\Bigr) \geq 1 - \frac{A d^{5/2}}{\epsilon^2 2^{d-1}}.
\end{equation}
\end{theorem}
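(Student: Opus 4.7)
The plan is to view $\mu$ as the Dirichlet distribution with all $2^d$ concentration parameters equal to $1$ on $\Delta_d$, and to derive both tail bounds from the second-moment behaviour of the coordinates of $p$ combined with Chebyshev's inequality and a union bound. Writing $n=2^d$, the standard Dirichlet$(1,\dots,1)$ formulas give $\mathbb E_\mu[p(\omega)]=1/n$, $\mathrm{Var}_\mu(p(\omega))=(n-1)/(n^2(n+1))$, and $\mathrm{Cov}_\mu(p(\omega),p(\omega'))=-1/(n^2(n+1))$ for $\omega\ne\omega'$. Since $L_d(p)(k)=\sum_{\omega:\,|\omega|=k}p(\omega)$ is a sum of $\binom{d}{k}$ of these coordinates, linearity of expectation yields $\mathbb E_\mu[L_d(p)(k)]=\binom{d}{k}/2^d=B(d,1/2,k)$; that is, the binomial is exactly the $\mu$-mean of $L_d(p)$.

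Expanding the variance using the Dirichlet covariances produces the telescoping identity
$$
\mathrm{Var}_\mu(L_d(p)(k))=\frac{\binom{d}{k}\bigl(2^d-\binom{d}{k}\bigr)}{2^{2d}(2^d+1)}\le\frac{\binom{d}{k}}{2^{2d}}.
$$
Controlling $\binom{d}{k}$ uniformly via Stirling, $\max_k\binom{d}{k}\le C\cdot 2^d/\sqrt{d}$, gives $\mathrm{Var}_\mu(L_d(p)(k))\le C/(\sqrt{d}\,2^d)$ for every $k$. Applying Chebyshev to each of the $d+1$ events $\{|L_d(p)(k)-B(d,1/2,k)|>\epsilon\}$ and union-bounding over $k\in\{0,\dots,d\}$ then gives
$$
\mu\bigl(\Delta_d\setminus\Delta(d;\epsilon)\bigr)\le\frac{(d+1)\,C}{\epsilon^2\,2^d\sqrt{d}}\le\frac{A\sqrt{d}}{\epsilon^2\,2^{d-1}}
$$
for an appropriate absolute constant $A\le 2$, which is \eqref{bound}.

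For \eqref{bound2} the supremum ranges over a much larger family, and a direct union bound over subsets is too crude. My approach would be to convert the sup into an $\ell^2$-quantity via the standard identity $\sup_I|\nu(I)|=\tfrac12\sum_k|\nu(\{k\})|$, valid for any signed measure $\nu$ with $\nu(\Omega)=0$, combined with Cauchy--Schwarz:
$$
\sup_{I}|L_d(p)(I)-B(d,1/2,I)|\le\tfrac12\sqrt{d+1}\,\Bigl(\sum_{k=0}^d(L_d(p)(k)-B(d,1/2,k))^2\Bigr)^{1/2}.
$$
The $\mu$-expectation of the inner sum equals $\sum_k\mathrm{Var}_\mu(L_d(p)(k))\le 1/2^d$ by the previous computation, so Markov's inequality bounds the probability that this inner sum exceeds $t$ by $1/(t\,2^d)$, and choosing $t$ to make the left-hand side comparable to $\epsilon$ produces an estimate of the form $\mathrm{poly}(d)/(\epsilon^2\,2^d)$. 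The main obstacle is the bookkeeping needed to pin down the specific polynomial factor $d^{5/2}$ while keeping $A\le 2$, which may require interleaving the Cauchy--Schwarz step with a more careful union bound over subsets $I$; the probabilistic input, however, is exactly the same Dirichlet second-moment estimate used in the first part.
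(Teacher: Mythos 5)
The paper does not actually prove this statement: it is imported verbatim from \cite{chevallier2011}, so there is no internal proof to compare against. Your blind derivation is, however, essentially correct and self-contained. Identifying the normalized Lebesgue measure on $\Delta_d$ with the Dirichlet$(1,\dots,1)$ law on $n=2^d$ coordinates is right, the moment formulas are right, and $L_d(p)(k)$ is a sum of $\binom{d}{k}$ exchangeable coordinates, hence Beta-distributed with $\mathrm{Var}_\mu(L_d(p)(k))=\binom{d}{k}\bigl(2^d-\binom{d}{k}\bigr)/\bigl(2^{2d}(2^d+1)\bigr)\le \binom{d}{k}2^{-2d}$; with $\max_k\binom{d}{k}\le \sqrt{2/\pi}\,2^d/\sqrt{d}$, Chebyshev plus a union bound over $k=0,\dots,d$ gives \eqref{bound} with $A\le 1$. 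For \eqref{bound2}, your Cauchy--Schwarz/Markov argument also goes through (under the natural reading that $I$ indexes sets of values of $S_d$, so that $\sup_I|\nu(I)|=\tfrac12\sum_k|\nu(\{k\})|$ for the centred measure $\nu$), and the ``bookkeeping obstacle'' you worry about is not there: since $\mathbb{E}_\mu\sum_k\nu(\{k\})^2\le 2^{-d}$, your chain yields a failure probability of at most $(d+1)/(8\epsilon^2 2^{d-1})$, which is \emph{stronger} than the stated $2d^{5/2}/(\epsilon^2 2^{d-1})$ by a factor of order $d^{3/2}$. (The $d^{5/2}$ in the cited statement is what one gets from the cruder route $\sup_I|\nu(I)|\le\frac{d+1}{2}\max_k|\nu(\{k\})|$ followed by an application of \eqref{bound} with $\epsilon$ replaced by $2\epsilon/(d+1)$.) The only caveat is notational: the paper never defines $L_d(p)(I)$ for $I\subset\{1,\dots,d\}$, and if $I$ were instead meant to index a sub-collection of the Bernoulli variables (so that $L_d(p)(I)$ is the law of a partial sum), your argument for \eqref{bound2} would not apply as written; the form of $B(d,1/2,I)$ makes that reading unlikely, but it is worth stating your interpretation explicitly.
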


\begin{remarks}

	\
	
	\begin{itemize}
		\item [1)] Observe that, since we are choosing at random a probability measure $p \in \Delta_d$ according to the normalized Lebesgue measure on the simplex $\Delta_d$, the mean value $\mathbb E(S_d)= d/2$.  
		\item [2)] Even for moderate $d$, the bound is very close to $1$, and the  results are uniform. For instance, if $d=30$ and $\epsilon=0.01$, the right hand side of (\ref{bound}) is greater than 0.99979, and for (\ref{bound2}), if $d=50$ and $\epsilon=0.01$, the bound is greater than 0.9999.
		For most probability distributions $p \in \Delta(d, \epsilon)$, the distribution of $S_d$ will be much the same, and so little information can be derived from its distribution. In other words, with high probability the distribution of $S_d$ will be very close to $B(d,1/2)$ if we choose at random the joint distribution $p$ of the random variables $X_1, \ldots, X_d$. This means that  the class of probabilities $p$ which are not close to the $B(d, 1/2)$ is very small.   
		\item [3)] However the set of ``rare" distributions $p$ contains some well-known structured distributions. For instance if the random variables $X_i$ are iid Bernoulli with common parameter $q \neq 1/2$, the law of $S_d$ is not close to the $B(d, 1/2)$. Moreover, if the random variables are independent but with different parameters,
		$$
		p(w) = \prod_{\{y_i=1, 1\leq i \leq d\}} q_i \prod_{\{y_i=0, 1\leq i \leq d\}} (1- q_i),  \qquad w \in \Omega_d,
		$$
		then the law of $S_d$ is a Poisson--Binomial distribution, given by
		
		$$
		P(S_d=k)=\sum  _{A\in \Lambda_k} \prod  _{i \in A}q_i \prod _{j\in A^c}(1-q_{j}),
		$$
		where $\Lambda_k$ is the family of all subsets of $\{0, 1, \ldots, k\}$.
		This typically is also far from  $B(d, 1/2)$. For instance, this will be the case if $ \bar q:=E(S_d)/d = \frac{1}{d} \sum_{i=1}^d q_i \neq 1/2$, where $q_i = P(X_i=1)$. 
	 
	\end{itemize}
	
	\end{remarks}
	
What if we assume that $\mathbb E(S_d)=d/2$?

	Even if $\mathbb E(S_d)=d/2$, from Theorem 1 in \cite{ehm1991}  we have that the total variation distance $d_{TV}(\cdot,\cdot)$ between the Binomial($d, 1/2$) and the Poisson Binomial distribution is larger than
	$$
	\frac{\sum_{j=1}^d (q_j - 1/2)^2}{ 31 d},
	$$
	which will not be small  except for the case where all but a vanishing fraction $\alpha_d$ of the $q_j$ are very close to $1/2$.

	From a statistical point of view we have to be very careful when dealing with high-dimensional binary data, if the data are not independent. Indeed, in contrast to the case where we have a Binomial$(d,q)$ distribution, where we have a sample of size $d$ of iid Bernoulli$(q)$ random variables, we have only one sample of the distribution $p$ in a high-dimensional binary space. However, these results provide an excellent way to perform some statistical tests for important applications. Indeed, we can use these results for testing if the distribution of $S_d$ has some structure, which in terms of our setting will correspond to be in the set of ``rare" distributions.

\subsection{Testing when we have only one high-dimensional binary datum}
\label{serie5}
\

Given a sample $X_1, \ldots, X_d$ of Bernoulli random variables, let $S_d = \sum_{i=1}^d X_i$ and let $0<\alpha<1$.
We start by fixing $\epsilon$ such that 
\begin{equation}
\label{rel}
1-\frac{2 \sqrt{d}}{\epsilon^2 2^{d-1}} \geq 1 - \alpha/2.
\end{equation}
For $\alpha=0.01$ and $\epsilon=0.01$ this will hold for any $d \geq 26$, for $\epsilon=0.005$ for any $d \geq 28$ and for $\epsilon=0.001$ for any $d \geq 36$. 

We consider as the null assumption that the distribution $p$ is not rare, i.e. that it belongs to the set $\Delta(d,\epsilon)$, a set that satisfies
\begin{equation}\label{nullhip}
\mu(\Delta(d; \epsilon))  \geq 1 - \alpha/2,
\end{equation}
and next we look if the observed value for $S_d$ is very atypical for a $B(d, 1/2)$.

Because of the definition of $\Delta(d, \epsilon)$, a conservative test of level $\alpha$ can be performed, rejecting the null assumption if
\begin{equation}\label{test}
S_d \notin [l,r],
\end{equation}
where $P(B(d,1/2) \notin [l,r]) \leq \alpha/2$.

Rejecting from the right ray or from the left ray will have different implications in some applications.

In case that we reject the null hypothesis, we would like to characterize in some way the rare distribution, but it will not be possible with just a single sample of the vector of binary data. In order to do so, we need to have an iid (or at least mixing) sample of the binary vectors. 

\subsection{Testing for rare distributions} \

We proceed as follows. For each vector $\mathbf{X}_i$ let $S_{id} = \sum_{j=1}^d X_{ij}$ the sum of the coordinates of the vector $\mathbf X_i$, $i=1, \ldots N$.
For each $k=0, \ldots, d$, we consider the empirical probability given by
\begin{equation} \label{empirical}
P_N(k):= \frac{1}{N} \sum_{l=1}^N \mathcal I_{\{S_{ld}=k\}}.
\end{equation}                                  
From the Hoeffding inequality (\cite{bennett1962,hoeffding1963}), we can derive that, for any $t>0$,
\begin{equation} \label{hoe}
P\left(\vert P_N(k)- P(S_{1d}=k)\vert > t \right) \leq 2 \exp( -2Nt^2),
\end{equation}
and so, from the union bound,
  \begin{equation} \label{hoe2}
P\Bigl(\max_{0 \leq k \leq d} \vert P_N(k)- P(S_{1d}=k)\vert > t \Bigr) \leq 2(d+1)\exp(-2Nt^2).
\end{equation}                 
Now we are ready to derive a conservative level-$\alpha$ test for the null hypothesis
\[
\text{ H0:   the distribution $p$ is not rare.}
\]
This can be obtained from inequalities (\ref{bound}) and (\ref{hoe}) or (\ref{hoe2}), rejecting the null assumption when:
\begin{equation} \label{eltest}
 \vert P_N(k) - B(d; 1/2,k) \vert   > a,
\end{equation}
where $a$ is chosen to satisfy the inequalities (\ref{ineq1}) and (\ref{ineq2}) below, and where $k$ is the observed value.
Indeed, we have that
\begin{align*}
&P\Bigl( \Bigl|P_N(k) - B(d; 1/2,k) \Bigr|  > a\Bigr) \\
&\leq P\Bigl( \Bigl| P_N(k) - P(S_{1d} = k) \Bigr|    > a/2\Bigr)
 + P\Bigl( \Bigl| P(S_{1d}=k)- B(d; 1/2,k) \Bigr|   > a/2\Bigr).
 \end{align*}
The first term on the right hand side will be smaller than $\alpha/2$ provided that $N$ and $a$ are
chosen so  that 
\begin{equation} \label{ineq1}
\exp(-2 N a^2)< \alpha/2,
\end{equation}  
while the second one will be smaller than $\alpha/2$ if $d$ and $a$ are chosen so that
\begin{equation} \label{ineq2}
\frac{8 \sqrt{d}}{a^2 2^{d-1}} < \alpha/2.
\end{equation}
An easy calculation leads to
\begin{equation}\label{valorcrit}
a= \max\left\{ \sqrt{\frac{-\log(\alpha/2)}{2N}},~\frac{d^{1/4}}{2^{(d-5)/2} \sqrt{\alpha}}\right\}.
\end{equation}
For instance, if $d=20$, $N=200$ and $\alpha=0.05$, then $a=\max(0.052, 0.086)$. 
For $d\geq 30$ the second term is negligible ($<0.0018$), and the bound is driven by the first term.

\section{Simulations} \label{simus}
\subsection{Example 1: Binary data classification}
\label{Ex:binary}
We generate two different multivariate Bernoulli distributions in dimensions 
$d \in \{5,10,15,20\}$.
Both distributions have marginals Bernoulli$(1/2)$. In one of them, the components are independent, while, for the other one, we consider correlated components with a parameter $\operatorname{Cor}$ taking values in the set $\{0.1, 0.3,0.5, 0.7,0.9 \}$ for each different scenario. 

We generate 200 observations for each class and $25\%$ of them are used for the testing sample. The data are generated as indicated in \cite{park1996}. The distances between the empirical distributions  $P_{n_i,i,u_l}$ and  $P_{{n_i}+1,i,u}$ are calculated according to the statistic   DTS, given for instance in \cite{dowd2020}.

Next we apply Algorithm \ref{alg1} to perform the classification. In all cases we use 100 one-dimensional projections. 
For each of the considered scenarios, 100 replicates are performed. The misclassification error of Random Forest,  \cite{needell2018} (denoted as N-S-W method) considering $L=3$ ``levels'', and our proposal are given in Table~\ref{tableber}, while the boxplots of the misclassification errors of our proposal are reported in Figure~\ref{fig21} for different numbers of projections. One can see that, when the correlations of the second group become higher, the distributions are ``further apart", and the misclassification error becomes smaller. Also, we observe that, when the dimension of the space is higher, the algorithm performs better. 

\begin{table}
\tiny
\caption{Average misclassification errors for each scenario in the Example \ref{Ex:binary}  over $1000$ replicates. Standard errors are reported in brackets.\label{tableber}}
\centering
\begin{tabular}{|c|cccc|cccc|cccc|cccc|}
\hline
  \textbf{mean}   & \multicolumn{4}{|c|}{RF}   & \multicolumn{4}{|c|}{N-S-W } & \multicolumn{4}{|c|}{$\textrm{RP}_{10000}$} \\   \cline{2-13} 
   \textbf{(sd)} & \multicolumn{12}{|c|}{Dimension} \\   \cline{1-13} 
  Corr. & 5 & 10 & 15 & 20& 5 & 10 & 15 & 20& 5 & 10 & 15 & 20 \\\cline{1-13} 
 \multirow{2}{*}{  0.1 }&46.7 &46.9 &46.5 & 46.1& 46.2 &47.2 &47.7 &46.8& 46.6 & 47.4 & 46.5 &45.8 \\
 &(5.0) & (5.1) & (5.2) & (5.2)&(5.1) & (5.5) & (5.0) & (5.0) &(4.6) & (5.0) & (5.1) & (4.6) \\\cline{1-13}
   \multirow{2}{*}{   0.3} & 34.8 & 32.7 & 32.0 & 31.7& 36.0 &33.3& 32.6 &31.5& 35.4 &34.0 &31.7& 31.8 \\
   &(4.9) & (4.4) & (4.3) & (4.2)&(5.3)  &(4.2)  &(4.4) & (4.1)& (4.8)  &(4.5)  &(5.3)  &(4.5)\\\cline{1-13}
      \multirow{2}{*}{  0.5}& 24.5 &19.6& 18.8 &17.9 & 24.7 &20.7 &19.2 & 18.3 & 24.8 &20.2& 19.3 &17.6 \\
      &(4.7) & (3.9) & (3.7) & (3.2)&(4.7) & (3.9) & (4.2) & (4.6)&(5.1) &  (4.0) &  (3.7)&  (3.7)\\\cline{1-13}
       \multirow{2}{*}{   0.7} & 17.1 &10.1 &7.4 &6.9 & 17.5 &10.8& 9.1 &8.5& 17.2 &10.7 &9.1 &8.5 \\
       &(3.7)  &(2.9) &(2.7)& (1.9)& (3.7) & (3.2)& (2.9) &(3.0)& (3.9) & (2.8)& (3.2) &(2.8)\\\cline{1-13}
          \multirow{2}{*}{  0.9} & 9.7 &3.9 &3.3 &3.0& 9.7 &4.5 &2.8& 2.5& 9.7 &3.9& 2.8 & 2.2 \\
          &(3.0) &(2.1) &(1.5) &(1.2)&(3.1) &(2.5)& (1.8) & (1.6)& (3.0) & (2.0) & (1.8) & (1.6) \\ \cline{1-13} 
\end{tabular}
\end{table}

\begin{figure}[htb]  
\centering
\includegraphics[width=\textwidth]{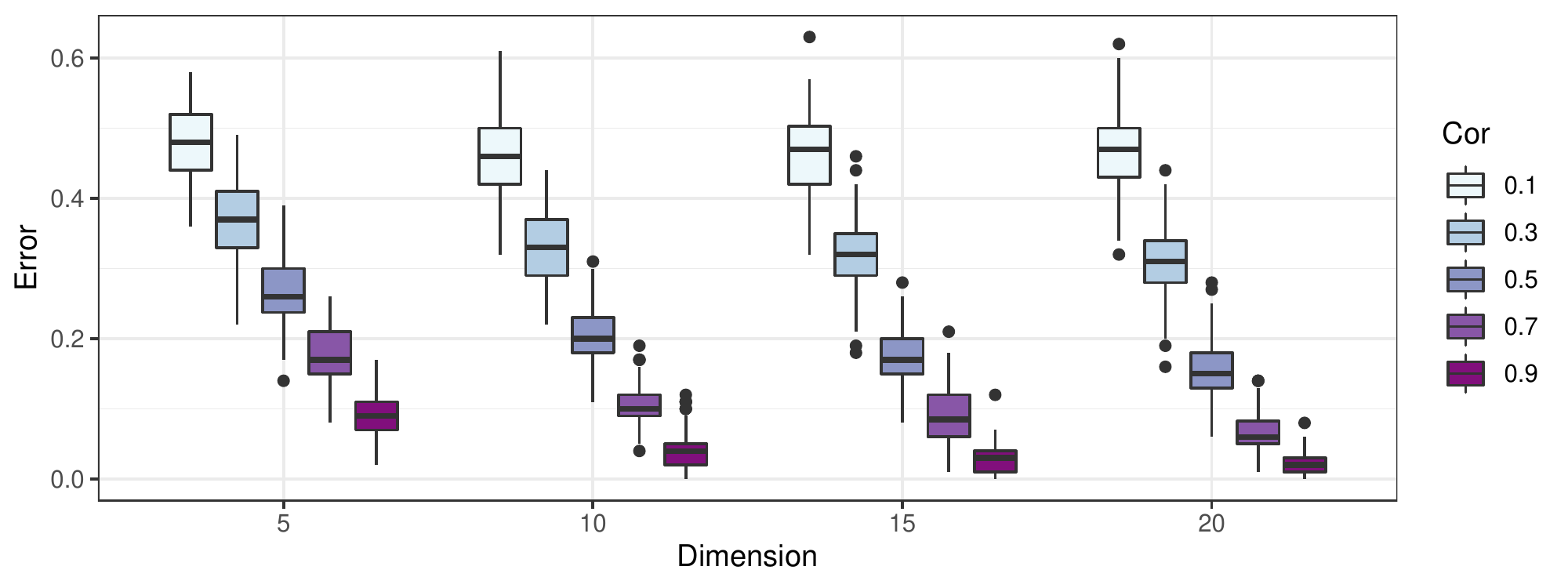}
\caption{Boxplots of the misclassification errors for each scenario based on 100 replications.  } \label{fig21}  
\end{figure}

\subsection{Example 2: Classification of discrete Tomography by using distances between histograms }

In this example we generate a set of ``phantom images'' using two different random patterns. 
In the first scenario, the first sample (of size $200$) each image is built with $5$ circles of centers  $(2,2)$, $(-2,2)$,$(2,-2)$,$(-2,-2)$ and $(0,0)$ with a normal random radius with mean $1$ and standard deviation $1/10$.
The second sample (of size 200) is built up in the same way, but adding a circle centred at $(0,2)$ and with a  normal random radius with mean $1/2$ and standard deviation $1/10$ (see Figure \ref{figg3}).  The points that define the image are in a equi-spaced grid at distance $0.05$.

\begin{figure}[htb]  
\centering
\includegraphics[width=55mm]{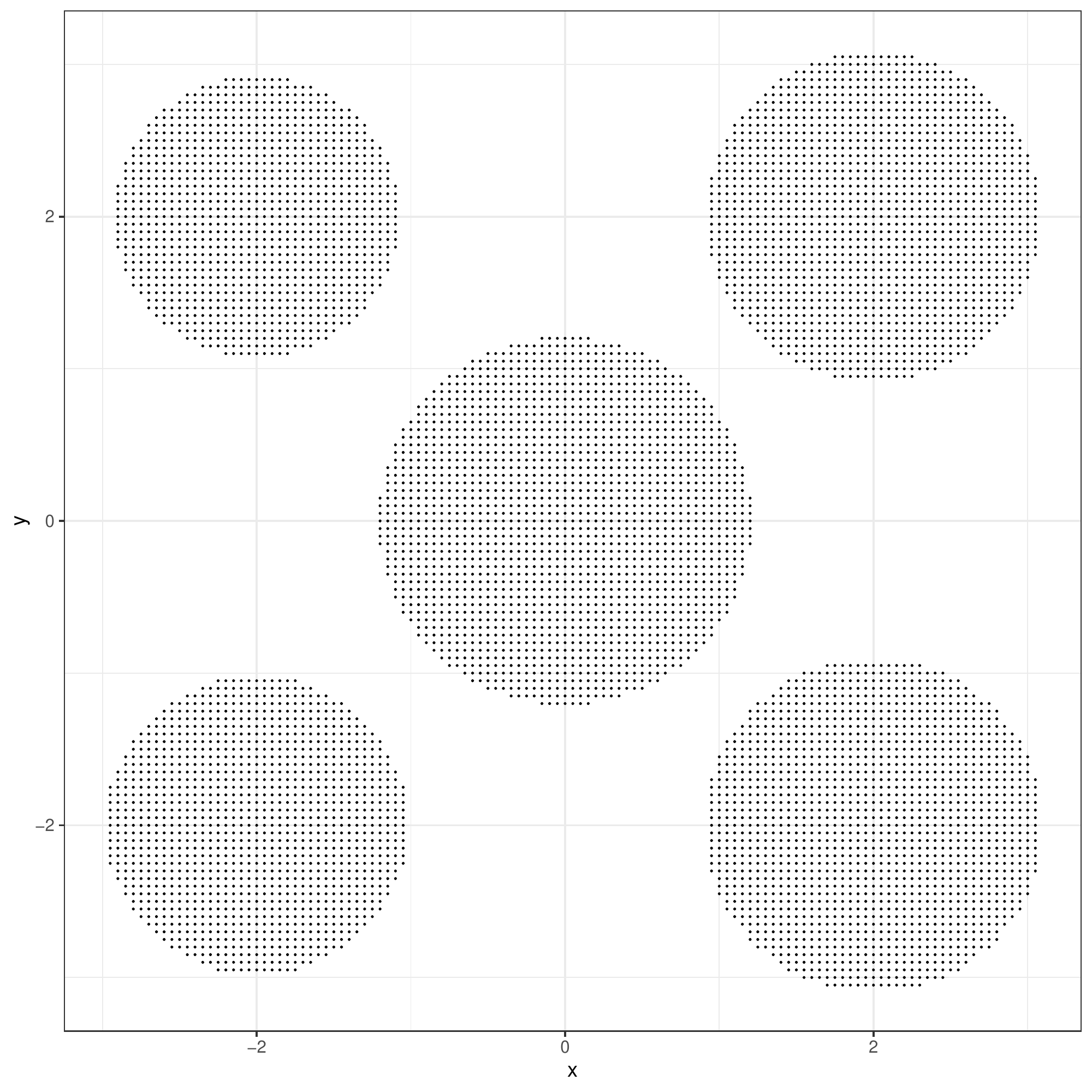}
\includegraphics[width=55mm]{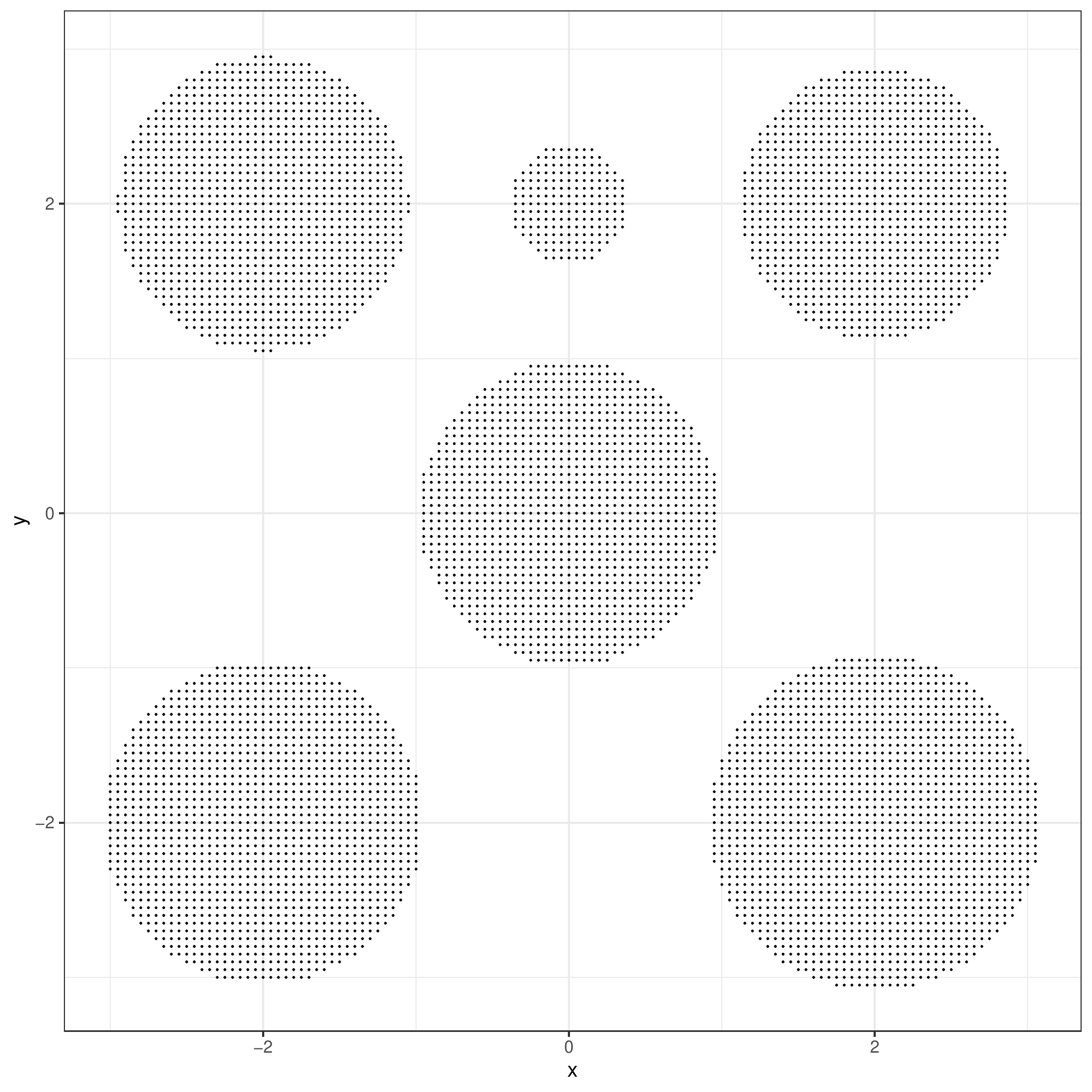}
\caption{Representation of an image of each group in scenario $1$.} 
\label{figg3} 
\end{figure}

For the second scenario, the first sample is equal to that of the first scenario.
Now the second sample consists on $5$ circles with centres $(2,2)$,  $(-2,2)$,$(2,-2)$,$(-2,-2)$, $(0,0)$ and random radius with mean $1.2$ and standard deviation $1/10$, see Figure \ref{figg4}. 

The value $k$ for the k-NN is the one that minimizes the misclassification error in the testing sample ($k=21$).

\begin{figure}[htb]  
\centering
\includegraphics[width=55mm]{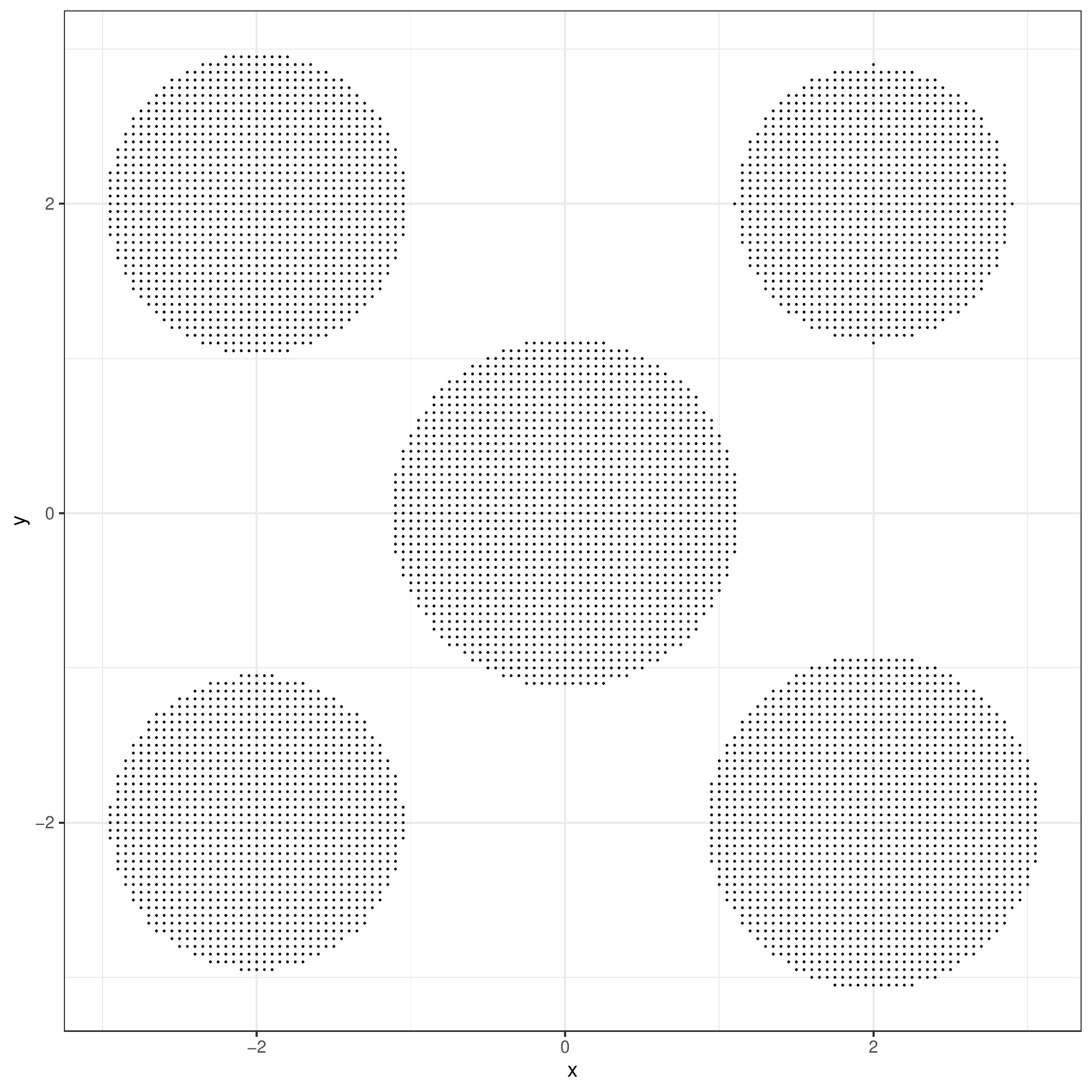}
\includegraphics[width=55mm]{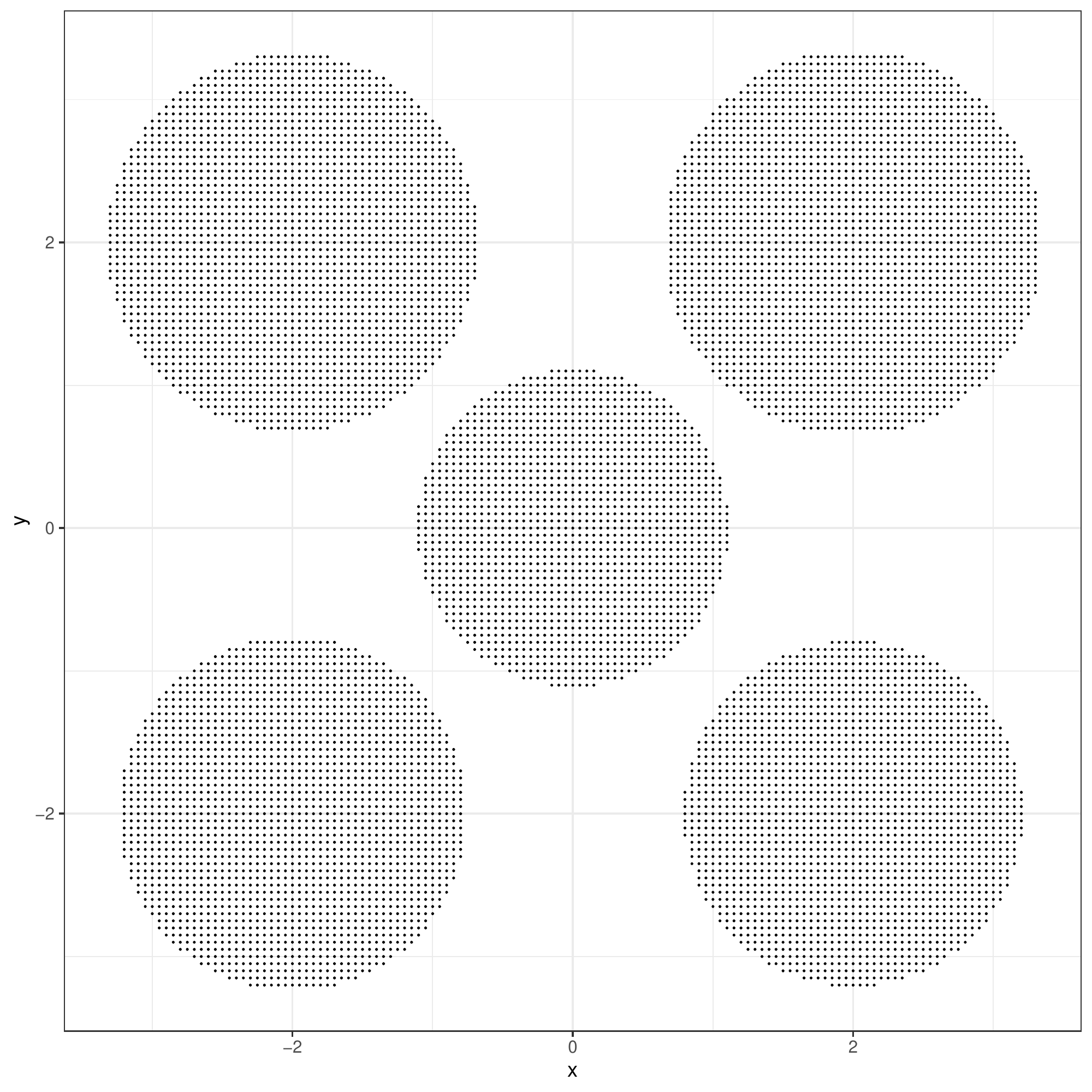}
\caption{Representation of an image of each group in scenario $2$.}
\label{figg4}
\end{figure}

$75 \%$ of the samples are considered for the training sample and the rest for the testing sample. We use Algorithm~\ref{alg4} with $100$ projections at random in each case. 

For each direction we project the points of the image and build up an histogram. Given a testing sample using $k_NN$, we observe the proportion of neighbours at each group. If the average over all directions of the proportions of each group is greater for group~1, then the image is classified as being from group~1, otherwise is classified as from group~2. As mentioned above, the distance between the histograms is calculated as indicated in  \cite{irpino2006}, i.e., using the Mallow distance in $L^2$.  
 
The misclassification rate in the testing sample are $2.55 \%$ and $6 \%$ in scenarios 1 and 2 respectively.

\subsection{Example 3: A test for binary data}

We consider a sample of Bernoulli vectors just  as in section \ref{test4}, $\{\mathbf{X_1},\ldots \mathbf{X_{200}}\}$  where ${\mathbf{X_i}}= (X_{i1}, \ldots, X_{id})\}$, for $i=1, \ldots, 200$, with $d=8$, and all marginals $X_{j} \sim \textrm{Bernoulli}(1/2)$ for all $j$.

The testing problem is the following: under the null assumption, the components of the vector are independent, i.e. 
\[
P_0( X_{1}=j_1, \ldots, X_{d} =j_d)= \Bigl( \frac{1}{2} \Bigr) ^d, \quad j_1, \ldots,j_d\in\{0,1\}.
\]

Under the alternative the data are simulated using the odds ratio
$$OR_{ij}= \frac{P\left( X_{i}=1, X_{j}=1 \right) P\left(X_{i}=0, X_{j}=0  \right)   }{P\left( X_{i}=0, X_{j}=1 \right) P\left(X_{i}=1, X_{j}=0  \right) }= \gamma,$$ 
with $\gamma\in [1,3]$ if $i \neq j$ and $+\infty$ if $i=j$. The larger the value of $\gamma$,
the further  the alternative is from the null. 

The data are projected into $k \in \{ 1,10,50,100,500\}$ random directions $u_1, \ldots, u_{k}$. The testing statistic is given by $KS(\gamma)= \frac{1}{k}\sum_{j=1}^{k} KS(P_{Nu_j},P_{H0})$. The distribution of the statistic under the null is obtained by Monte Carlo. The power function is obtained using $1000$ replicates as a function of $\gamma$, see Figure \ref{ftest1}.

Observe that, with just $50$ projections, for  $\gamma \geq 1.75$ the power of the test is greater than  $73 \%$. The simulations were performed using the R package \textbf{mipfp}, see \cite{barthelemy2018}.

\begin{figure}[htb]  
\centering
\includegraphics[width=\textwidth]{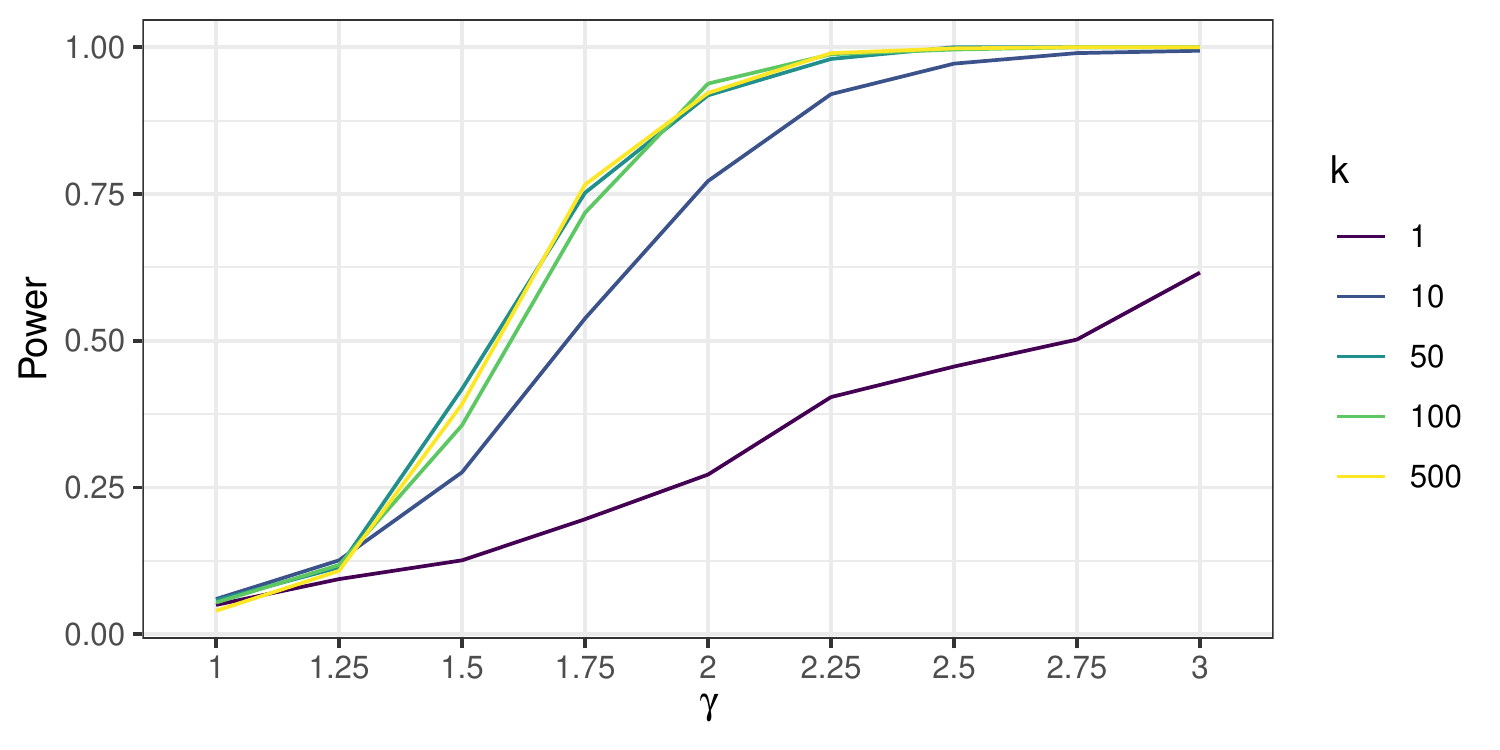}
\caption{Estimated power function for $\gamma\in [1,3]$ for different number of projections.} \label{ftest1}  
\end{figure}

\subsection{Example 4: Test for Poisson-Binomial}

Next we consider the case of Poisson-Binomial $S_d$ distributions with $d \in \{50,100,200,500,1000\}$, i.e. the sum of $d$ independent Bernoulli random variables with different parameters.  The $d$ probability parameters of the distribution are generated from a Beta distribution with parameters  $\gamma_1 \in [2,4]$ and  $\gamma_2 =2$. If $\gamma_1 =\gamma_2$, then
$$ 
\frac{1}{d} E \left( S_d \right) = \frac{1}{d} \sum_{i=1}^{d} q_i \approx  \frac{\gamma_1}{\gamma_1+\gamma_2} =\frac{1}{2},
$$ 
for $d$ large enough.

As indicated in Section \ref{serie5}, the power of the test is estimated via Monte Carlo for each value of $d$ as a function of $\gamma_1$, see Figure \ref{ftest2}. The behaviour becomes better when increasing the dimension $d$, while the problem becomes easier by increasing $\gamma_1$.

\begin{figure}[htb]  
\centering
\includegraphics[width=\textwidth]{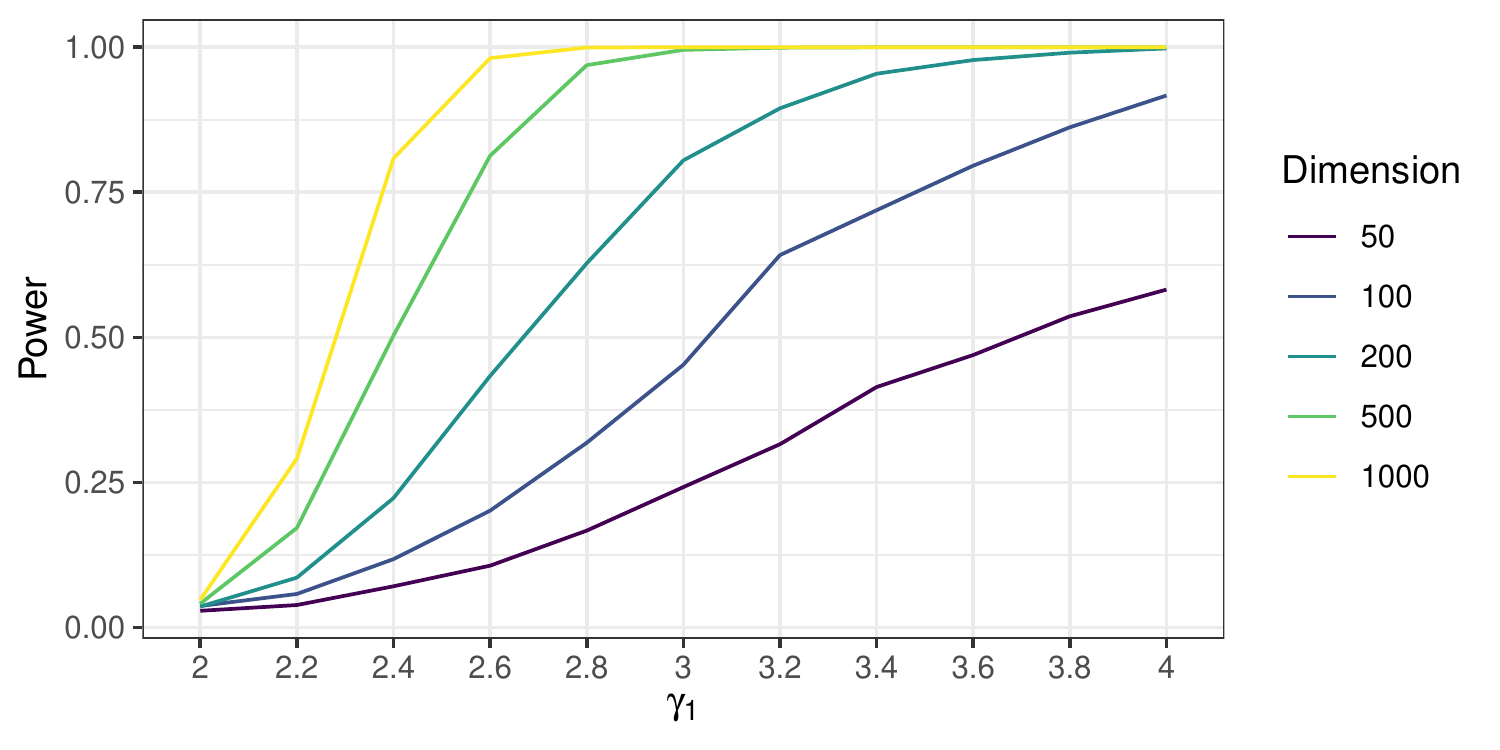}
\caption{Power function for $\gamma_1 \in [2,4]$. } \label{ftest2}  
\end{figure}

\section{Real-data examples}\label{S:realdata}

In this section three examples with real data are exhibited. In both cases the data are a set of binary vectors and Algorithm $1$ is performed.

\subsection{Example 5}

The data describe the diagnostic of  Cardiac Single Proton Emission Computed Tomography (SPECT) images. Each patient is classified into one of two categories: normal and not normal. The database has 267 images called SPECT corresponding to each patient, ``that measures radioactive counts that represent perfusion in the LV muscle in a specified ROE" (regions of interest) resulting 22 regions of interest, see \cite{kurgan2001}.

The images are processed and transformed into a vector of dimension $44$ of continuous variables. Later on, this pattern is processed again to obtain a binary vector of dimension $22$ for each patient. 

The sample is split into $80$ observations for the training sample and $187$ for the testing sample. The database is available from the UCI repository\footnote{\url{https://archive.ics.uci.edu/ml/datasets/spect+heart}}.

We apply Algorithm $1$ to classify the data using different numbers of random projections ($k \in \{5,10,20,50,100,500,1000,10000 \}$). 

The procedure is replicated $100$ times at each scenario and the boxplots of the misclassification rates  are given in Figure \ref{corazon}.

\begin{figure}[htb]  
\centering
\includegraphics[width=\textwidth]{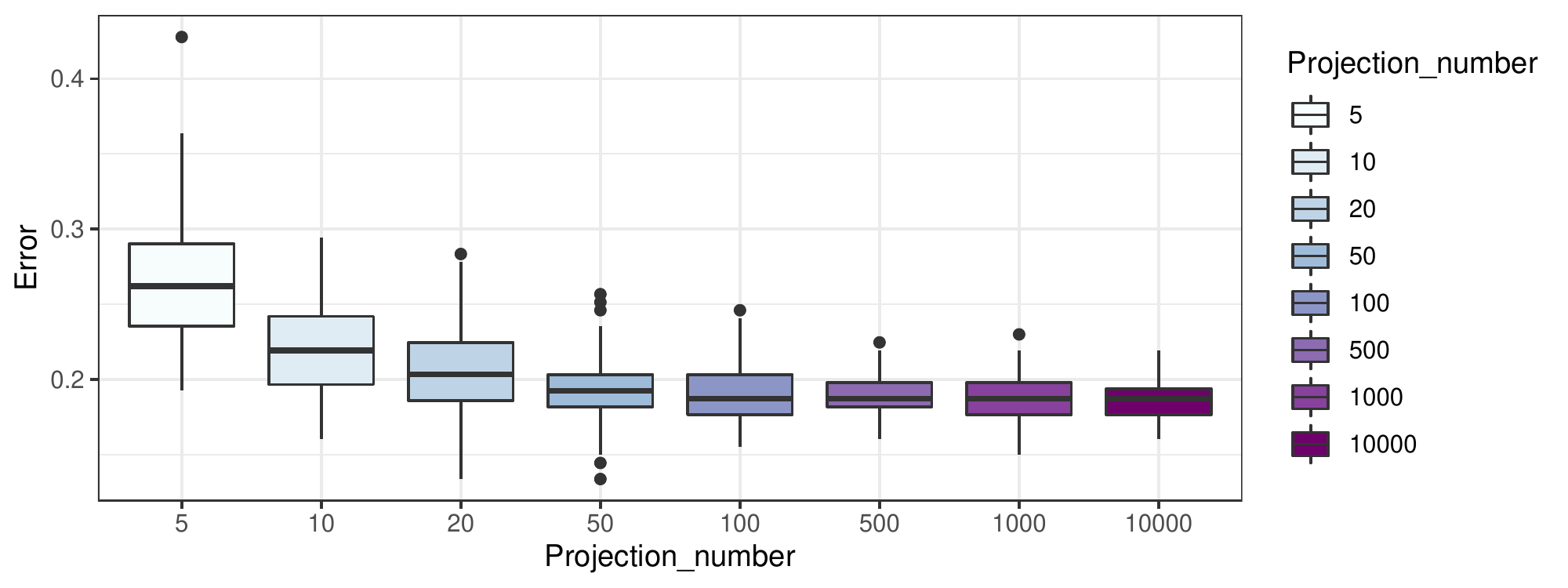}
\caption{Boxplot of the misclassification rates obtained from the testing sample for different numbers of random projections. } \label{corazon}  
\end{figure}

The performance of the method that we propose (denoted by $\textrm{RP}_k$) is compared with two classical classification methods (Random Forest and SVM) and in particular with the proposed methodology for binary data in N-S-W (with $L=3$). Table~\ref{tablecorazon} shows the confusion matrices obtained in the test sample for each method. The misclassification rates obtained from the testing sample are $23 \%$, $24.1 \%$, $19.3 \%$ and $18.4 \%$  for Random Forest, SVM ,N-S-W, and $\textrm{RP}_k$ respectively. Therefore, in this example, methods designed for discrete data perform better than some general classification methods. In addition, the N-S-W and $\textrm{RP}_k$ methods have similar behaviour.

\begin{table}
\caption{Confusion matrices (in percentages) in the test samples of binary classifications. Here $0$ is normal and $1$ and is not normal.\label{tablecorazon}}
\centering
\begin{tabular}{|c|ccc|ccc|ccc|ccc|}
\hline
  & \multicolumn{12}{|c|}{ Labels}  \\ \cline{2-13} 
  & \multicolumn{3}{|c|}{ Random Forest} &\multicolumn{3}{|c|}{ SVM }& \multicolumn{3}{|c|}{N-S-W} & \multicolumn{3}{|c|}{$\textrm{RP}_{10000}$}    \\ \cline{2-13} 
 &  & \textbf{0} & \textbf{1} & & \textbf{0}  & \textbf{1} & & \textbf{0} &  \textbf{1} & & \textbf{0} &  \textbf{1} \\  \hline
\multirow{2}{*}{Pred.} & \textbf{0} & 5.9 & 2.1& \textbf{0}& 6.4 &   1.6& \textbf{0}& 5.3 & 2.7 & \textbf{0} & 5.1  & 2.9 \\
& \textbf{1} & 20.9  & 71.1 & \textbf{1}& 22.5 & 69.5 & \textbf{1} & 16.6 & 75.4 &  \textbf{1} & 15.5 & 76.5   \\ \cline{1-13} 
\end{tabular}
\end{table}

\subsection{Example 6}
In \cite{grisoni2019},  consensus machine learning algorithms are used to predict the binding to the androgen receptor AR. 
		We quote from \cite{grisoni2019}:
		
	``The nuclear androgen receptor (AR) is one of
	the most relevant biological targets of Endocrine Disrupting
	Chemicals (EDCs), which produce adverse effects by
	interfering with hormonal regulation and endocrine system
	functioning.
	The nuclear androgen receptor (AR), whose gene is located on
	the X chromosome, is expressed in a wide range of tissues and
	plays a fundamental biological role in bone, muscle, prostate,
	adipose tissue, and the reproductive, cardiovascular, immune,
	neural, and hemopoietic systems. AR is one of the target
	receptors of the so-called Endocrine Disrupting Chemicals
	(EDCs), exogenous compounds able to disturb hormonal
	regulation and the endocrine system functioning, thereby
	producing adverse effects in humans and wildlife.  EDCs can
	interact directly with a given nuclear receptor and perturbate or
	modulate downstream gene expression,  but they can also have
	direct effects on genes and epigenetic impact.  Disruption of
	AR-mediated processes can cause irreversible consequences in
	human health. For example, some chemicals, like pesticides
	(e.g., DDT), disrupt male reproductive development and
	function by inhibiting androgen-receptor-mediated events. 
	Recently, machine learning (ML) and computer-aided
	techniques have shown to be useful for modeling nuclear
	receptor modulation of chemicals at different levels, such as drug
	discovery and design and testing prioritization campaigns.''

The dataset, which is available at  the UCI repository\footnote{\url{https://archive.ics.uci.edu/ml/datasets/QSAR+androgen+receptor}}, contains agglutinate (positive) and no negative agglutinate molecules.
$150$ of each type are taken as training sample and $45$ of each type as testing sample. Each molecule is represented by $1024$  binary molecular fingerprints, that is, a 1024 length sequence of 0s and 1s, see \cite{grisoni2019,piir2021}.

The boxplots of the misclassification errors are given in Figure \ref{gen} for different numbers of random projections.

\begin{figure}[htb]  
\centering
\includegraphics[width=\textwidth]{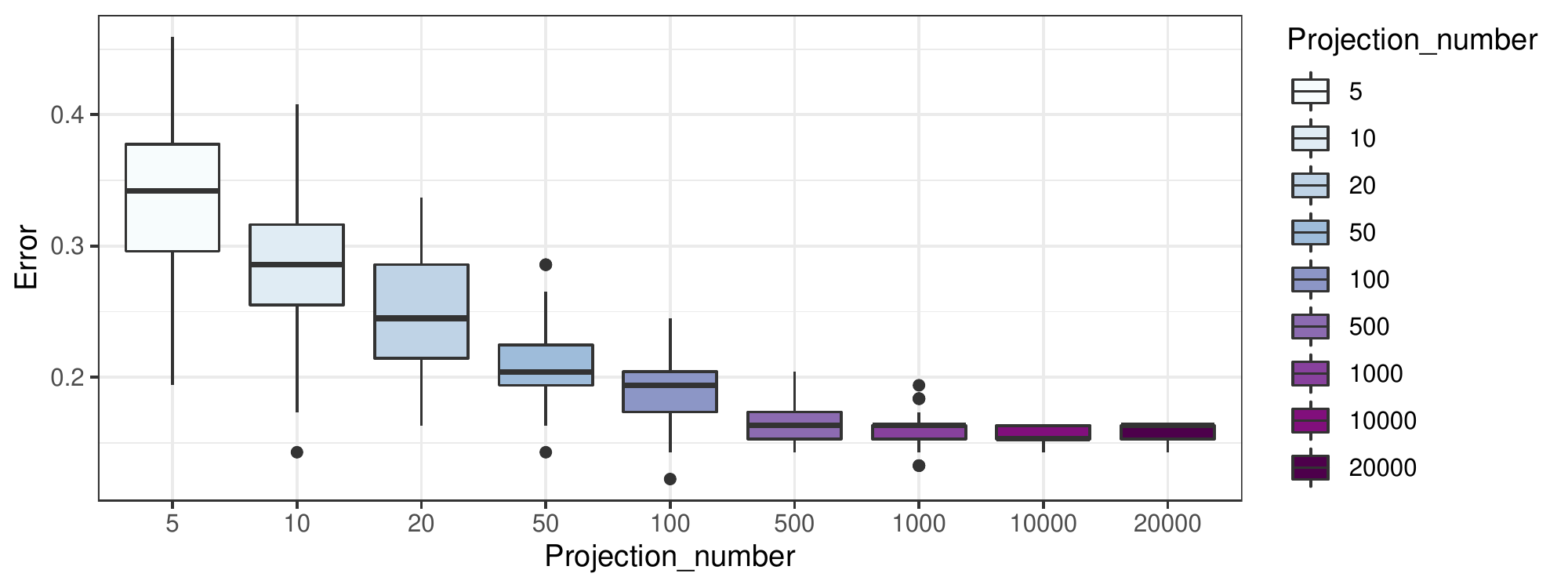}
\caption{Boxplots for the misclassification errors at the testing sample for different number of random projections.} \label{gen}  
\end{figure}

Considering $1000$ projections, the performance of the classification in each class is evaluated through three indices in the test sample:
\begin{description}
\item[Sensitivity] $\textrm{Sn}= \frac{\textrm{TP}}{\textrm{TP}+\textrm{FN}}= 0.97$
\item[Specificity] $\textrm{Sp}= \frac{\textrm{TN}}{\textrm{TN}+\textrm{FP}}= 0.72$
\item[Non-Error Rate] $\textrm{NER}= \frac{\textrm{Sn}+ \textrm{Sp}} {2}= 0.845,$
\end{description}
where TP and TN stand for the misclassification errors in each class. 
These results correspond to the median values over 100 replicates. We observe that, with respect to the results obtained in \cite{grisoni2019}, we get a slightly smaller value for the  specificity, but   better results for the sensitivity and the   non-error rate.

\subsection{Example 7: The MNIST database }
This example is similar to Example $2$, but with real data. The base has a set of handwritten digits and is available in \url{http://yann.lecun.com/exdb/mnist/}. The digits have been size-normalized and centered in a fixed-size image. Each image is represented by a matrix of $28\times 28= 784$ pixels.  Each pixel takes an integer value from $0$ to $255$ (grey scale).  The labels form the components of a vector representing the digit shown in the image.
In this work the image is dichotomized to 1--0 (black and white respectively). If the pixel is above 100 on the grey scale we associate 1 to it, and otherwise 0. As an illustration, between the sample of digits $3$, $4$ and $9$ in a test sample, the classification is performed (as in  the Example $2$). 
In Figure \ref{xx}, three dichotomized images of each of these digits are shown.

\begin{figure}[htb]  
\centering
\includegraphics[width=0.9\textwidth, trim=100 430 100 180, clip=true]{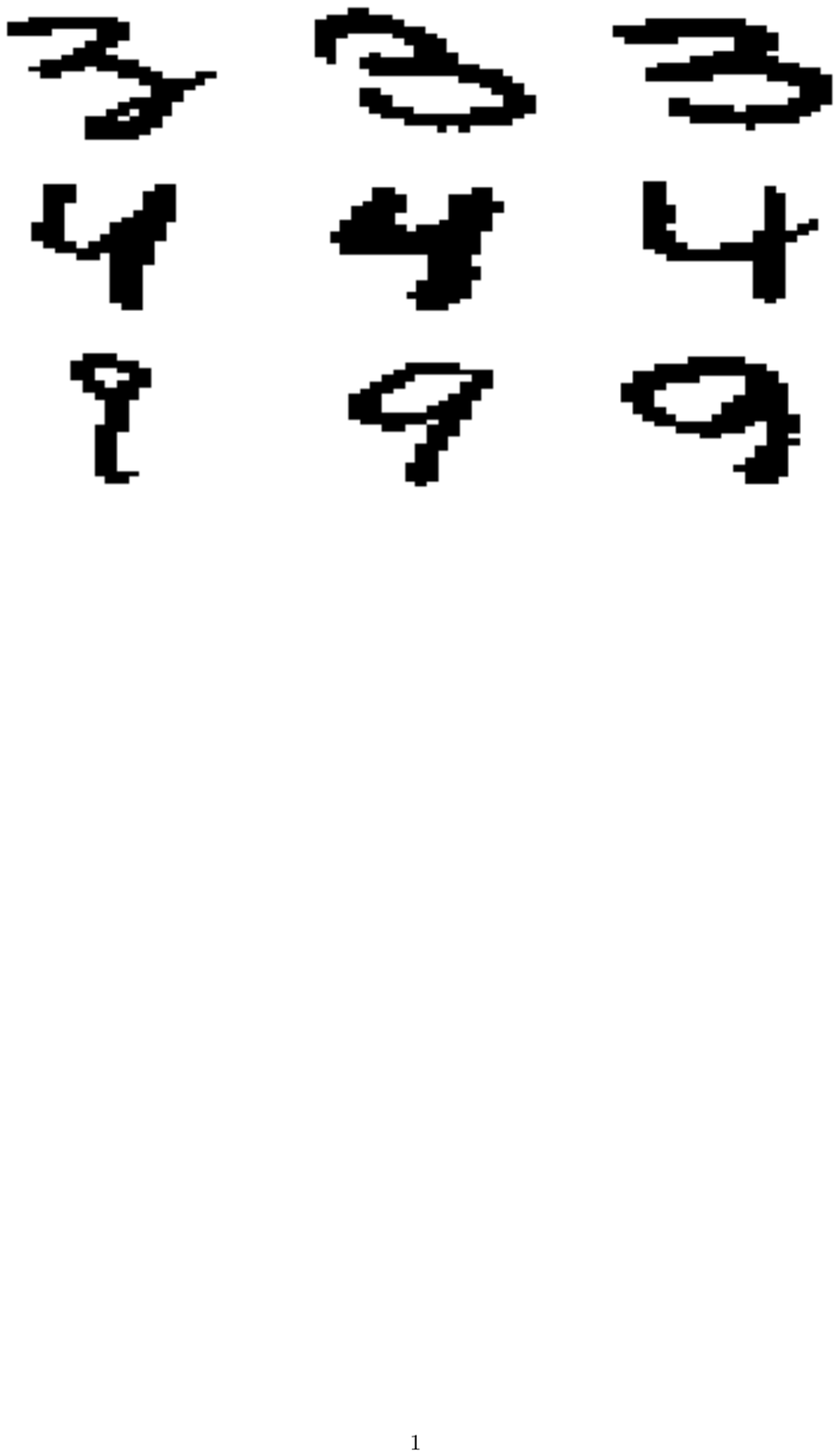}
\caption{Representation of dichotomized sample images: three digits 3 (first row), three digits 4 (second row) and three digits 9 (third row).} \label{xx}  
\end{figure}

The training sample size is $758$, $736$, $757$ and the test sample size is $252$, $246$, $252$ for digits 3, 4 and 9 respectively. The value $k=19$ (for the k-NN) and $100$ random projections are considered.
The confusion matrix of the binary classification in the test sample  are reported in Table \ref{table1}.

\begin{table}
\caption{Confusion matrices in the test samples of binary classifications.\label{table1}}
\centering
\begin{tabular}{|c|ccc|ccc|ccc|}
\hline
  & \multicolumn{9}{|c|}{ Labels}  \\ \cline{2-10} 
 &  & \textbf{3} & \textbf{4} & & \textbf{3}  & \textbf{9} & & \textbf{4} &  \textbf{9} \\  \hline
\multirow{2}{*}{Prediction} & \textbf{3} & 248 & 23 & \textbf{3}& 243 & 7& \textbf{4}& 230 & 17  \\
& \textbf{4} & 4 & 223& \textbf{9}& 9 & 245 & \textbf{9}& 16 & 235  \\ \cline{1-10} 
\end{tabular}
\end{table}
    
Therefore, the overall prediction errors are $5.4 \%$, $3.2 \%$ and $7.6 \%$ in the test sample for the classifications of digit pairs 3-4, 3-9 and 4-9 respectively.

\section{Conclusions}
We provide  some new statistical techniques to treat discrete high-dimensional data, based on some extensions of Heppes' theorem, in particular a quantitative version of Heppes' theorem that bounds total variation distance between two probability measures in the high-dimensional space by the sum of the total variation distances between their projections.

We show that, if we know in advance the supports of the distributions (which is the case, for instance, of multivariate Bernoulli distributions),
then it suffices to consider just one projection, provided that the direction is chosen to exclude a certain finite
or countable  set of bad directions, which are well determined.

Using the preceding results, we develop a new procedure for learning based on projections, and we adapt it to the case of discrete tomography, where we only have access to information related to the projections in a certain family of directions.

We also address the problem of testing for multivariate binary data, in particular for some well-known models like the Poisson-Binomial model. Our approach can be used for many other different problems for high-dimensional discrete data,
for example:
the Dirichlet-multinomial distribution, the multivariate hypergeometric distribution, the multivariate P\'olya--Eggenberger distributions, and the  negative multinomial distribution, among others (see \cite{johnson1997}).
We further analyze the case where we only have one realization of a multivariate Bernoulli distribution. 

Lastly, we perform a small simulation study and analyze three real-data examples. The results obtained  are quite encouraging. 

We conclude this article with a brief discussion of the potential extension of these ideas to continuous distributions.
As mentioned in the introduction, in order to distinguish between general continuous distributions, it is usually necessary to use
an infinite set of projections (see e.g.\ \cite{hamedani75}).
However, there is at least one case where finitely many projections will suffice. A Borel probability measure $P$ on $\RR^d$ is called \emph{elliptical} if its characteristic function has the form
\[
\phi_P(\xi)=e^{i\mu\cdot\xi}\psi(\xi^T \Sigma \xi)
\quad(\xi\in\mathbb{R}^d),
\]
where $\Sigma$ is a real positive semi-definite $d\times d$ matrix, 
$\mu$ is a vector in $\mathbb{R}^d$, and $\psi:[0,\infty)\to\mathbb{C}$ is a continuous function. Examples of elliptical distributions include multivariate versions of
Gaussian, Student, Cauchy, Bessel, uniform and logistic distributions. The following theorem  is a summary of 
results established in \cite{fraiman22}.

\begin{theorem}\label{T:CWelliptic}
Given $d\ge2$, there exists a set $\cL$ of $(d^2+d)/2$ lines in $\RR^d$ with the following property: if $P,Q$ are elliptical distributions on $\mathbb{R}^d$ such that $P_L=Q_L$ for all $L\in\cL$, then $P=Q$. No smaller set of lines will do.
\end{theorem}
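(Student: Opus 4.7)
The approach is to reduce the problem to linear algebra on the space of symmetric $d\times d$ matrices. The key observation is that if $P$ is elliptical with parameters $(\mu,\Sigma,\psi)$ and $L=\spn(v)$ with $|v|=1$, then $P_L$ is itself elliptical on $L$ with characteristic function
\[
\phi_{P_L}(t)=e^{it(\mu\cdot v)}\,\psi(t^2\, v^T\Sigma v)\qquad(t\in\RR),
\]
so with location $\mu\cdot v$, scale $v^T\Sigma v$, and the \emph{same} generator $\psi$. I would choose vectors $v_1,\dots,v_N$ with $N=d(d+1)/2$ so that the rank-one symmetric matrices $v_i v_i^T$ form a basis for the $d(d+1)/2$-dimensional space of symmetric $d\times d$ matrices; an explicit construction takes the $d$ coordinate vectors $e_i$ together with the $\binom{d}{2}$ vectors $e_i+e_j$, $i<j$. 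Set $\cL=\{\spn(v_i)\}_{i=1}^N$ and note that the $v_i$ automatically span $\RR^d$, for otherwise all $v_i v_i^T$ would lie in a proper subspace of symmetric matrices.

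Suppose now $P,Q$ are elliptical with $P_{L_i}=Q_{L_i}$ for every $i$. Matching the location parts of the two characteristic functions on each line yields $\mu_P\cdot v_i=\mu_Q\cdot v_i$, and hence $\mu_P=\mu_Q$. The remaining identity reads
\[
\psi_P(t^2 a_i^P)=\psi_Q(t^2 a_i^Q)\qquad(t\in\RR,\ i=1,\dots,N),
\]
with $a_i^P:=v_i^T\Sigma_P v_i$ and $a_i^Q:=v_i^T\Sigma_Q v_i$, which on setting $s=t^2 a_i^P$ becomes $\psi_P(s)=\psi_Q(\lambda_i s)$ for all $s\ge 0$, where $\lambda_i=a_i^Q/a_i^P$; degenerate sub-cases (some $a_i^P=0$, or $\psi_P\equiv 1$ forcing $P$ to be a Dirac mass) are handled separately. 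Comparing two indices gives $\psi_Q(\lambda_i s)=\psi_Q(\lambda_j s)$ for all $s\ge 0$, and since a continuous function on $[0,\infty)$ satisfying $\psi_Q(\cdot)=\psi_Q(\lambda\,\cdot)$ with $\lambda\ne 1$ is identically $1$, either $Q$ is a Dirac mass (and then $P=Q=\delta_{\mu_P}$) or all $\lambda_i$ coincide with a single value $\lambda>0$. In the latter case $v_i^T(\Sigma_Q-\lambda\Sigma_P)v_i=0$ for every $i$, so since $\{v_i v_i^T\}$ spans the symmetric matrices, $\Sigma_Q=\lambda\Sigma_P$, while $\psi_Q(\cdot)=\psi_P(\cdot/\lambda)$. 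These two facts together yield $\phi_P=\phi_Q$ via the intrinsic reparametrisation ambiguity $(\Sigma,\psi)\leftrightarrow(\lambda\Sigma,\psi(\cdot/\lambda))$, and hence $P=Q$.

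For the minimality, suppose $|\cL|<d(d+1)/2$. Then the matrices $v_i v_i^T$ cannot span the symmetric matrices, so there is a nonzero symmetric $M$ with $v_i^T M v_i=0$ for every $i$. Take $P=N(0,I)$ and $Q=N(0,I+\epsilon M)$ with $\epsilon>0$ small enough that $I+\epsilon M$ remains positive definite; both are elliptical (Gaussian) distributions, they are distinct because $\epsilon M\neq 0$, yet $v_i^T(I+\epsilon M)v_i=v_i^T v_i$ gives $P_{L_i}=Q_{L_i}$ for every $i$. The main obstacle I expect is the book-keeping around the scale-generator ambiguity $(\Sigma,\psi)\leftrightarrow(\lambda\Sigma,\psi(\cdot/\lambda))$ and the degenerate situations where $\psi$ is constant, $\Sigma$ is singular, or some $a_i^P$ vanishes: one must verify that the one-parameter freedom extracted from the functional equations $\psi_P(s)=\psi_Q(\lambda_i s)$ is exactly the intrinsic symmetry of the elliptical family, and that no spurious solutions arise at the boundary.
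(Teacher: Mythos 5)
The paper does not actually prove this theorem: it is stated as ``a summary of results established in \cite{fraiman22}'', so there is no in-paper argument to compare against. Your proof is essentially correct and follows what is, as far as I can tell, the same route as the cited reference: reduce everything to the observation that $\phi_{P_L}(t)=e^{it(\mu\cdot v)}\psi(t^2v^T\Sigma v)$, so that a line only sees the linear functional $\Sigma\mapsto v^T\Sigma v=\langle \Sigma, vv^T\rangle$, whence the critical number of lines is $\dim\mathrm{Sym}_d(\RR)=(d^2+d)/2$; the spanning set $\{e_i\}\cup\{e_i+e_j\}$, the functional equation $\psi_P(s)=\psi_Q(\lambda_i s)$ forcing a common $\lambda$ unless $\psi_Q\equiv1$, and the Gaussian counterexample $N(0,I)$ versus $N(0,I+\epsilon M)$ for the sharpness are all the standard ingredients. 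Two small points you wave at but should nail down: (i) extracting $\mu_P\cdot v_i=\mu_Q\cdot v_i$ from equality of the projected characteristic functions is not purely formal --- the clean way is to note that near $t=0$ one has $\psi_P(a_i^Pt^2)\neq0$, so $e^{it(\mu_P\cdot v_i-\mu_Q\cdot v_i)}=\psi_Q(a_i^Qt^2)/\psi_P(a_i^Pt^2)$ is an even function of $t$, forcing the exponent to vanish (equivalently, a probability measure has at most one centre of symmetry); (ii) the degenerate branches ($a_i^P=0$ for some or all $i$, $\lambda_i=0$, $\psi\equiv1$) all collapse to Dirac masses exactly as you indicate, and the indices with $a_i^P=a_i^Q=0$ contribute the identities $v_i^T(\Sigma_Q-\lambda\Sigma_P)v_i=0$ trivially, so the spanning argument still closes. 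With those details written out, your argument is a complete and self-contained proof of the statement.
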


Theorem~\ref{T:CWelliptic} leads  to the derivation statistical tests for equality of elliptical distributions. This program is carried out in \cite{fraiman22}.
An interesting avenue for further exploration, suggested by one of the referees, would be to combine the two cases already studied 
(discrete and elliptical) to develop tests for distributions that are discrete distributions with weak noise, for example, vector sums of discrete distributions and elliptical distributions with small variance.

\section*{Funding statement}
Fraiman and Moreno  were supported by grant FCE-1-2019-1-156054, Agencia Nacional de Investigaci\'on e Innovaci\'on, Uruguay. Ransford was supported by grants from NSERC and the Canada Research Chairs program.

\section*{Data availability statement}
All the data that we use are publicly available; the precise sources are cited in the article. The source code files in R are available from the authors upon request.

\bibliographystyle{rss}
\bibliography{biblio_ber}

\end{document}